\patchcmd{\section}{\normalfont}{\bfseries}{}{}
\renewcommand{\@secnumfont}{\bfseries}
\setlist[enumerate,1]  {label={\rm (\roman*)}, leftmargin=1.5em}   
\setlist{noitemsep} 
\newcommand{\R}{{\mathbb R}}
\newcommand{\N}{{\mathbb N}}
\newcommand{\C}{{\mathbb C}}
\newcommand{\Z}{{\mathbb Z}}
\newcommand{\eps}{\varepsilon}
\newcommand{\D}{\displaystyle}
\newcommand{\<}{\langle}
\renewcommand{\>}{\rangle}
\newcommand{\Car}{\mbox{$\mathcal{X}$}} 
\newcommand{\mydef}{\ensuremath{\stackrel{\text{def}}{:=}}}
\newtheorem{theorem}{Theorem}[section]
\newtheorem{remark}[theorem]{Remark}
\newtheorem{lemma}[theorem]{Lemma}
\newtheorem{definition}[theorem]{Definition}
\newtheorem{proposition}[theorem]{Proposition}
\newtheorem{corollary}[theorem]{Corollary}
\numberwithin{equation}{section}
\renewenvironment{proof}[1][Proof]{\noindent \textbf{#1.} }
{\  \rule{0.5em}{0.5em}\par \medskip}
\title{
Exponential decay for fractional Schrödinger parabolic problems
}
\thanks{Partially supported by Projects PID2019-103860GB-I00 and PID2022-137074NB-I00, MINECO, Spain.
\newline{$\phantom{ai}$ Key words and phrases: fractional Schr\"odinger operator, linear evolution problem, stability}.
\newline{$\phantom{ai}$ Mathematical Subject Classification
2020:\ 35J10, 35R11, 47D06, 35B35}.
}
\thanks{${}^{*}$Partially supported by Severo
Ochoa Grant CEX2019-000904-S funded by MCIN/AEI/10.13039/ 501100011033.}
\begin{document}

\date{\bf \today}
\maketitle

\begin{center}
{\sc Jan W. Cholewa}${}^1\ $  \ {\sc and} \ {\sc Anibal Rodriguez-Bernal}${}^\text{2}$
\end{center}

\makeatletter
\begin{center}
${}^{1}$Institute of
Mathematics\\
University of Silesia in Katowice\\ 40-007 Katowice, Poland\\ {E-mail:
jan.cholewa@us.edu.pl}
\\ \mbox{}
\\
${}^{2}$Departamento de An\'alisis Matem\'atico y  Matem\'atica Aplicada\\ Universidad
  Complutense de Madrid\\ 28040 Madrid, Spain\\ and \\
  Instituto de Ciencias Matem\'aticas \\
CSIC-UAM-UC3M-UCM${}^{*}$, Spain  \\ {E-mail:
arober@ucm.es}
\end{center}
\makeatother

\setcounter{tocdepth}{3}
\makeatletter
\def\l@subsection{\@tocline{2}{0pt}{2.5pc}{5pc}{}}
\def\l@subsubsection{\@tocline{2}{0pt}{5pc}{7.5pc}{}}
\makeatother

\begin{abstract}
We discuss exponential decay in $L^p(\R^N)$, $1\leq p \leq \infty$, of
solutions of a  fractional Schr\"odinger parabolic equation  with a
locally uniformly integrable potential. The exponential type of the
semigroup of solutions is  considered and its dependence in 
 $1\leq p \leq \infty$ is addressed. We characterise a large class
of potentials for which solutions decay exponentially.  

\end{abstract}

\section{Introduction}

In this paper we discuss exponential decay of solutions of fractional Schrödinger
  semigroups
\begin{equation}
  \label{eq:intro_schrodinger_parabolic}
  \begin{cases}
    u_{t} + (-\Delta)^{\mu} u + V(x) u =0, \quad  x\in \R^{N}, \ t>0
    \\
u(x,0)= u_{0}(x), \quad  x\in \R^{N}
  \end{cases}
\end{equation}
with $0< \mu \leq 1$, $u_{0} \in
L^{p}(\R^{N})$ with $1\leq p\leq \infty$.

The nonnegative potential $V\geq 0$ belongs to the locally
uniform space $L_U^{p_{0}}(\R^N)$, which,  for  $1\leq p_{0}<\infty$,  is composed
of the functions $V \in L_{loc}^{p_{0}} (\R^N)$ such that there exists
$C>0$ such that for all $x_0\in\R^N$
\begin{displaymath}
\int_{B(x_0,1)} |V|^{p_{0}}\leq C
\end{displaymath}
endowed with the norm
\begin{displaymath}
\|V \|_{L_U^{p_{0}} (\R^N)}=\sup_{x_0\in\R^N}\|V \|_{L^{p_{0}}
  (B(x_0,1))} .
\end{displaymath}
For ${p_{0}}=\infty$ we define  $L_U^\infty(\R^{N})=L^\infty(\R^{N})$.

For $\mu=1$ and  potentials $0\leq V \in L^{1}(\R^{N}) +
L^{\infty}(\R^{N})$ it was proved in \cite{Arendt-Batty} that the
exponential decay in $L^{p}(\R^{N})$ of solutions of
(\ref{eq:intro_schrodinger_parabolic}) holds iff $V$ is sufficiently
positive at infinity in the sense that
\begin{equation}\label{eq:intro_Arendt-Batty_condition}
\int_{G} V (x) dx=\infty
\end{equation}
for any open set  $G \subset \R^{N}$ containing arbitrarily large balls, that is,  such that for
any $r > 0$ there exists $x_0 \in \R^N$ such that the ball of radius
$r$ around $x_0$  is included in  $G$.

To extend this result to solutions of
(\ref{eq:intro_schrodinger_parabolic}) with $0<\mu \leq 1$ and $0\leq V
\in L_U^{p_{0}}(\R^N)$  we first prove in Section
\ref{sec:fractional-schrodinger-semigroup}  that if $p_{0}>\max\{\frac
N{2\mu}, 1\}$ then for any $1\leq p\leq \infty$,
(\ref{eq:intro_schrodinger_parabolic}) is well posed in
$L^{p}(\R^{N})$ and that it defines a contraction semigroup of solutions
$u(t,u_{0}) = S_{\mu, V}(t) u_{0}$. This semigroup is order
preserving, strongly continuous if $1\leq p <\infty$ and analytic if
$1<p<\infty$, see Proposition
\ref{prop:nonegative_potential}. Moreover, for smooth initial data,
solutions can be represented in terms of the fractional unperturbed
semigroup  (that is, when $V=0$) and the variation of constants
formula. Also, for nonnegative initial data, the larger the potential, 
the smaller the solution is, see Proposition
\ref{prop:VCF_with_uniform_potential}. If $V\in L^{\infty}(\R^{N})$
then the variation of constants formula applies to all initial data in
$L^{p}(\R^{N})$, see Proposition
\ref{prop:VCF_with_bounded_potential}, 
and therefore it is natural to consider potentials $0\leq V \in
L_U^{p_{0}}(\R^N)$ that can be approximated in $L_U^{p_{0}}(\R^N)$ by bounded ones and to
analyze the convergence of the corresponding semigroups, see
Proposition \ref{prop:approximation_by_bounded_functions} and
Theorem \ref{thm:convergence_of_semigroups}.

With these tools, in Section \ref{sec:exponential-decay} we address
the exponential decay of solutions of
(\ref{eq:intro_schrodinger_parabolic}) in $L^{p}(\R^{N})$ with $1\leq
p\leq \infty$ for potentials $0\leq V \in L_U^{p_{0}}(\R^N)$
satisfying (\ref{eq:intro_Arendt-Batty_condition}). For this, we first
 analyze how  the exponential decay, actually, the exponential type of the
semigroup $\{S_{\mu, V}(t)\}_{t\geq 0}$,  depends on  $1\leq
p\leq \infty$. We prove that if $0<\mu \leq 1$ then the exponential type
is either zero or it is negative  for all $1\leq p \leq \infty$
simultaneously. If $\mu =1$ we recover the
result that the exponential type is independent of  $1\leq p
\leq \infty$, see Theorem \ref{thm:exponential_type_independent_Lp}. Then we prove that  that if solutions
decay exponentially then  (\ref{eq:intro_Arendt-Batty_condition})
holds. Conversely, if the potential can be approximated by 
bounded ones, then  (\ref{eq:intro_Arendt-Batty_condition}) actually
characterises the exponential decay, see Theorem
\ref{thm:decay_and_bottom_spectrumL2}.

Finally, in Section \ref{sec:basic-results} we have included several
known results for the unperturbed problem
(\ref{eq:intro_schrodinger_parabolic}) with $V=0$.

In this paper we denote by $c$ or $C$ generic constants that may
change from line to line, whose value is not important for the
results.

Also we will denote $A \sim B$ to denote quantities (norms or
functions, for example) such that there exist positive  constants $c_{1}, c_{2}$
such that $c_{1} A \leq B \leq c_{2}A$. Similarly we use $A \lesssim
B$ to indicate $A\leq c B$.

\section{Basic results on fractional operators and semigroups}
\label{sec:basic-results}

In this section we review several known results for the case of the
unperturbed equation, that is,  (\ref{eq:intro_schrodinger_parabolic})  with $V=0$.

First, for the standard heat equation, $\mu=1$, we have that for a wide
class of initial data, including, $L^{p}(\R^{N})$ for $1\leq p\leq
\infty$, the solution of (\ref{eq:intro_schrodinger_parabolic}) is
given by
\begin{equation} \label{eq:semigroup_kernel_L1U}
S(t)u_{0} (x) = \int_{\R^{N}} k(t,x ,y) u_{0}(y) \, dy  \quad t>0, \
x\in \R^N,
\end{equation}
for the selfsimilar convolution kernel
\begin{equation}\label{eq:kernel-heat-smgp}
0\leq k(t,x,y)= \frac{1}{t^{\frac{N}{2}}} k_{0}
    \left(\frac{x-y}{t^{\frac{1}{2}}} \right)= \frac{1}{(4\pi
      t)^\frac{N}{2}} e^{-\frac{|x-y|^2}{4t}} .
\end{equation}

In particular,  (\ref{eq:semigroup_kernel_L1U}) defines an  order preserving
semigroup  of contractions in $L^p(\R^N)$, $1\leq p\leq \infty$ and
 \begin{displaymath}
  \|S(t)  \|_{\mathcal{L}(L^p(\R^N))} = 1 , \quad t > 0 .
\end{displaymath}
This semigroup is strongly continuous if $1\leq p< \infty$ and
analytic if $1<p<\infty$. In particular, if  $1\leq p < \infty$ then for $u_0\in L^p(\R^N)$ we have
\begin{displaymath}
S(t)u_0\to u_0 \ \text{ as } \ t\to 0^+ \quad \mbox{in
    $L^p(\R^N)$ for any $1\leq p<\infty$}.
\end{displaymath}
  If $p=\infty$ then
 for $u_0\in L^\infty(\R^N)$
\begin{equation}\label{eq:behavior-at-0-of-S{1}(t)}
  S(t)u_0 \to u_0 \ \text{ as } \ t\to 0^+ \quad \mbox{in
    $L^p_{loc}(\R^N)$ for any $1\leq p<\infty$}.
\end{equation}
For all these classical results see e.g.
\cite{Cazenave-Haraux}.

Finally, because of the regularity of the kernel, we have that for
$1\leq p\leq \infty$ and $u_0\in L^{p}(\R^N)$, $u(t) = S(t) u_{0}$ in
(\ref{eq:semigroup_kernel_L1U}) is of class
$C^{\infty}((0,\infty)\times \R^{N})$ and satisfies the heat equation
pointwise in $(0,\infty)\times \R^{N}$.

The semigroup of solutions possesses also several \emph{smoothing}
estimates among which we recall that
for $1\leq p\leq q \leq
\infty$ there exists a constant $c_{p,q}>0$ such that
\begin{displaymath}
  \|S(t)\|_{{\mathcal L}(L^p(\R^N), L^q(\R^N))} =
\frac{c_{p,q}}{  t^{\frac{N}{2}(\frac1p-\frac1q)}}, \quad  t>0 .
\end{displaymath}

The fractional powers of $-\Delta$ can be defined in many equivalent
ways, see \cite{Kwasnicki}. From the heat semigroup
(\ref{eq:semigroup_kernel_L1U}) it can be defined
as
\begin{displaymath}
(-\Delta)^{\mu}
\phi(x)=\frac{1}{\Gamma(-\mu)}
\int_0^{\infty}\left(S(t) \phi(x)-\phi(x)\right) \frac{d t}{t^{1+\mu}}
\end{displaymath}
for $0<\mu <1$, which coincides with the nonlocal operator defined by
\begin{equation} \label{eq:fractional_laplacian_nonlocal} 
(-\Delta)^{\mu} \phi(x)  =C_{N, \mu} \text { P.V. }
    \int_{\mathbb{R}^N} \frac{\phi(x)-\phi(z)}{|x-z|^{N+2\mu}} d z
\end{equation}
with $ C_{N, \mu}  =\frac{2^{2\mu} \mu \Gamma(\frac{N}{2}+\mu))}{\pi^{N / 2}
      \Gamma(1-\mu)}$, see \cite{DiNPV}. When considered in
    $L^{p}(\R^{N})$ for $1\leq p\leq \infty$,  the domain of $(-\Delta)^{\mu} $
    is the set of functions $\phi \in L^{p}(\R^{N})$ such that
    $(-\Delta)^{\mu} \phi \in L^{p}(\R^{N})$ with the natural  norm
    \begin{displaymath}
      \|\phi\|_{L^{p}} + \|(-\Delta)^{\mu} \phi \|_{L^{p}} ,
    \end{displaymath}
see \cite[Section A.2]{C-RB-scaling1} when
$1\leq p<\infty$ and  \cite[Appendix A]{C-RB-linear_morrey} when
$p=\infty$. For $1<p<\infty$ this domain  coincides with the    Bessel
space $H^{2\mu}_{p}(\R^{N})$. When $p=2$ these spaces will be denoted by $H^{2\mu}(\R^{N})$.

  Using \cite[Lemma 6.2]{C-RB-scaling1} we have in particular,
  that for $1<p<\infty$,
\begin{displaymath}
   \|(-\Delta)^{\frac{1}{2}} \phi\|_{L^{p}(\R^{N} )} \sim
          \sum_{j=1}^N\|\partial_j \phi\|_{L^{p}(\R^N)}  \quad \text{in
        $ H^{1}_p(\R^N)$},
\end{displaymath}
and  for $\mu= \frac{1}{2}+ \mu'\in(0,1)$
\begin{displaymath}
    \|(-\Delta)^{\mu} \phi\|_{L^{p}(\R^{N} )} \sim   \sum_{j=1}^N
    \|(-\Delta)^{\mu'} \partial_j\phi\|_{L^{p}(\R^{N} )}  .
\end{displaymath}
 In particular, for $p=2$ and  for $\mu= \frac{1}{2}+ \mu'\in(0,1)$
 \begin{displaymath}
   \|(-\Delta)^{\mu} \phi\|_{L^{2}(\R^{N} )} \sim
    \sum_{j=1}^N [\partial_j \phi]_{W^{2\mu',2}(\R^{N} )}
\end{displaymath}
where in  the right hand side we have the Gagliardo seminorms
  \begin{displaymath}
    [\phi]^{p}_{W^{\mu,p}(\R^N)} = \iint_{\R^{N} \times \R^{N}}
    \frac{|\phi(x) -\phi(y)|^{p}}{|x-y|^{N+\mu p}} \, dy dx.
  \end{displaymath}

The fractional semigroup, that is, the semigroup defined by the
fractional Laplacian, can be constructed from the heat semigroup for
$0<\mu <1$ and $1\leq p<\infty$  as
\begin{equation}\label{eq:S0alpha(t)-in-MU}
S_{\mu}(t) \phi = \int_{0}^{\infty} f_{t,\mu}(s) S(s) \phi \,
ds=\int_{0}^{\infty} f_{1,\mu}(s) S(st^\frac{1}{\mu}) \phi \, ds,
\quad t> 0
\end{equation}
and $S_\mu(0)\phi =\phi$ for  $\phi\in L^p(\R^N)$, where $f_{t,\mu}$ is given by
\begin{displaymath}
0\leq   f_{t,\mu} (\lambda) =
  \begin{cases}
    \frac{1}{2\pi i} \D \int_{\sigma -i \infty}^{\sigma + i \infty}
    e^{z\lambda -tz^{\mu}} \, dz & \lambda \geq0,
    \\
    0 & \lambda <0,
  \end{cases}
\end{displaymath}
$\sigma >0$, and the branch for $z^{\mu}$ is chosen such that
$Re(z^{\mu})>0$ if $Re(z)>0$, and we have $\int_{0}^{\infty}
f_{t,\mu}(s) \, ds =1$,  see \cite[Appendix A.3]{C-RB-scaling1},
\cite[p. 259]{1978_Yosida} and \cite[(20'), p. 264]{1978_Yosida}. This
semigroup is  bounded, strongly continuous and analytic, see
\cite[Corollary 4.1]{C-RB-scaling1} and  \cite[Appendix
A.3]{C-RB-scaling1}. For $p=\infty$ the same construction is
possible, although the semigroup is bounded and  analytic but not strongly
continuous, see  \cite[Proposition B.1]{C-RB-linear_morrey}.  In particular, for
$1\leq p\leq \infty$ and $\mu\in(0,1)$, as the semigroup is analytic,
see \cite[Proposition 2.1.1]{lunardi95:_analy}, we have that
$u(t)=S_\mu(t)u_0$ with $u_0\in L^p(\R^N)$ solves
\begin{displaymath}
    u_{t} +(-\Delta)^\mu u =0 , \quad x \in \R^{N}, \ t>0, \qquad u(0)=u_0
  \end{displaymath}
  and   for  $1\leq p < \infty$ we have
\begin{displaymath}
S_\mu(t)u_0\to u_0 \ \text{ as } \ t\to 0^+ \quad \mbox{in
    $L^p(\R^N)$}.
\end{displaymath}

When $p=\infty$ the following holds.

\begin{proposition}

For $u_0\in L^\infty(\R^N)$ and $0<\mu<1$
\begin{displaymath}
  S_\mu(t)u_0 \to u_0 \ \text{ as } \ t\to 0^+ \quad \mbox{in
    $L^p_{loc}(\R^N)$ for any $1\leq p<\infty$}.
\end{displaymath}

\end{proposition}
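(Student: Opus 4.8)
The plan is to derive the claim from the already recorded local convergence $S(r)u_0\to u_0$ in $L^p_{loc}(\R^N)$ as $r\to 0^+$ for the heat semigroup, equation (\ref{eq:behavior-at-0-of-S{1}(t)}), together with the subordination identity (\ref{eq:S0alpha(t)-in-MU}), whose construction is the same for $p=\infty$ as noted in the text (and which can be read pointwise a.e., since for each $r>0$ the function $x\mapsto S(r)u_0(x)=\int_{\R^N}k(r,x,y)u_0(y)\,dy$ is bounded and continuous). Fix a compact set $K\subset\R^N$, fix $1\leq p<\infty$, and fix an arbitrary sequence $t_n\to 0^+$. Since $f_{1,\mu}\geq 0$ and $\int_0^\infty f_{1,\mu}(s)\,ds=1$, the second expression in (\ref{eq:S0alpha(t)-in-MU}) gives, pointwise on $K$,
\begin{displaymath}
  S_\mu(t_n)u_0 - u_0 = \int_0^\infty f_{1,\mu}(s)\bigl(S(s\,t_n^{1/\mu})u_0 - u_0\bigr)\, ds .
\end{displaymath}

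Next I would apply Minkowski's integral inequality — legitimate because $f_{1,\mu}\geq 0$ — to obtain
\begin{displaymath}
  \|S_\mu(t_n)u_0 - u_0\|_{L^p(K)} \leq \int_0^\infty f_{1,\mu}(s)\, \bigl\|S(s\,t_n^{1/\mu})u_0 - u_0\bigr\|_{L^p(K)}\, ds ,
\end{displaymath}
and then pass to the limit $n\to\infty$ inside the integral by dominated convergence. For the pointwise-in-$s$ limit: for each fixed $s>0$ one has $s\,t_n^{1/\mu}\to 0^+$, hence $\|S(s\,t_n^{1/\mu})u_0 - u_0\|_{L^p(K)}\to 0$ by (\ref{eq:behavior-at-0-of-S{1}(t)}). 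For the dominating function: since $S(r)$ is a contraction on $L^\infty(\R^N)$,
\begin{displaymath}
  \bigl\|S(s\,t_n^{1/\mu})u_0 - u_0\bigr\|_{L^p(K)} \leq 2\,|K|^{1/p}\,\|u_0\|_{L^\infty(\R^N)} ,
\end{displaymath}
and $s\mapsto f_{1,\mu}(s)\cdot 2|K|^{1/p}\|u_0\|_{L^\infty(\R^N)}$ lies in $L^1(0,\infty)$. Measurability of $s\mapsto \|S(s\,t_n^{1/\mu})u_0 - u_0\|_{L^p(K)}$ follows since $r\mapsto S(r)u_0$ is continuous from $[0,\infty)$ into $L^p(K)$: for $r>0$ this is dominated-convergence continuity of the heat-kernel representation, and at $r=0$ it is exactly (\ref{eq:behavior-at-0-of-S{1}(t)}). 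Thus the right-hand side tends to $0$; as $t_n\to 0^+$ was arbitrary, $S_\mu(t)u_0\to u_0$ in $L^p(K)$ as $t\to 0^+$, for every compact $K$ and every $1\leq p<\infty$, which is the assertion.

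I do not anticipate a genuine obstacle; the only slightly delicate points are the interchange of the $L^p(K)$-norm with the $s$-integral (handled by Minkowski's integral inequality with the nonnegative weight $f_{1,\mu}$) and the passage to the limit inside the integral (handled by dominated convergence, the uniform bound coming from the $L^\infty$-contractivity of $S(r)$). As an alternative that avoids invoking dominated convergence, one may split $\int_0^\infty f_{1,\mu}=\int_0^R + \int_R^\infty$: the tail $\int_R^\infty$ is made small uniformly in $n$ via $\int_0^\infty f_{1,\mu}=1$ and the crude bound $2|K|^{1/p}\|u_0\|_{L^\infty(\R^N)}$, while on $(0,R)$ one estimates $\|S(s\,t_n^{1/\mu})u_0-u_0\|_{L^p(K)}\leq \sup_{0\leq r\leq R\,t_n^{1/\mu}}\|S(r)u_0-u_0\|_{L^p(K)}$, which tends to $0$ as $n\to\infty$ by (\ref{eq:behavior-at-0-of-S{1}(t)}).
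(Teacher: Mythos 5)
Your proposal is correct and follows essentially the same route as the paper: both use the subordination formula (\ref{eq:S0alpha(t)-in-MU}), pull the $L^p(K)$-norm inside the $s$-integral, and conclude by dominated convergence with the uniform bound $2|K|^{1/p}\|u_0\|_{L^\infty}$ together with (\ref{eq:behavior-at-0-of-S{1}(t)}). You merely spell out the measurability and domination details (and an optional splitting alternative) that the paper leaves implicit.
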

\begin{proof}
  Using the second expression in (\ref{eq:S0alpha(t)-in-MU}) and
  \cite[formula (14), p. 262]{1978_Yosida}, if  $u_0\in
  L^\infty(\R^N)$ and we take a ball $B\subset \R^N$, using
  (\ref{eq:behavior-at-0-of-S{1}(t)}) and Lebesgue's dominated convergence theorem, we get
\begin{displaymath}
\|S_\mu(t)u_0 - u_0 \|_{L^p(B)} \leq \int_0^\infty f_{1,\mu} (s)
\|S(st^\frac{1}{\mu})u_0 - u_0 \|_{L^p(B)} \to 0 \ \text{ as } \
t\to0^+.
\end{displaymath}
\end{proof}

  Also, from (\ref{eq:S0alpha(t)-in-MU}) and the properties of the
  heat semigroup, it is easy to obtain that the
  fractional semigroup is of contractions and order preserving in
  $L^{p}(\R^{N})$ for $1\leq p\leq \infty$. Actually $ \|S_\mu(t)
  \|_{\mathcal{L}(L^p(\R^N))} = 1$ for all $t\geq 0$, see e.g.
  \cite[Corollary 3.7]{C-RB-scaling1}. Moreover it satisfies,
  among other,  the   smoothing estimates
\begin{equation}\label{eq:fractional_smoothing_LpLq}
  \|S_{\mu}(t)\|_{{\mathcal L}(L^p(\R^N), L^q(\R^N))} =
\frac{c_{p,q,\mu}}{  t^{\frac{N}{2\mu}(\frac1p-\frac1q)}}, \quad  t>0 ,
\end{equation}
for  $1\leq p\leq q \leq \infty$, see
\cite[Theorem 6.2]{2017BonforteSireVazquez:_optimal-existence}
and \cite[Proposition 6.5]{C-RB-scaling1}.

We also have for $0\leq \gamma \leq \gamma'\leq 1$, $1<p\leq q <\infty$
\begin{equation}\label{eq:fractional_smoothing_Sobolev}
  \|S_{\mu}(t)\|_{{\mathcal L}(H^{2\gamma}_p(\R^N),
    H^{2\gamma'}_q(\R^N))}
  \leq
  \frac{1}{t^{\frac{N}{2m\mu}(\frac1p-\frac1q)}} \max\{c_{p,q,\mu} ,
  \frac{c_{\gamma,\gamma',p,q,\mu}}{t^{\frac{\gamma'-\gamma}{\mu}}}  \}
  , \quad  t>0.
\end{equation}

The fractional semigroup has a $C^{\infty}$, positive convolution self
similar kernel such that
\begin{equation} \label{eq:fractional_semigroup_kernel_L1U}
S_{\mu}(t)u_{0} (x) = \int_{\R^{N}} k_{\mu}(t,x ,y) u_{0}(y) \, dy  \quad t>0, \
x\in \R^N,
\end{equation}

\begin{equation}\label{eq:kernel-fractional-heat-smgp}
  0\leq k_{\mu}(t,x,y)= \frac{1}{t^{\frac{N}{2\mu}}} k_{0,\mu}
    \left(\frac{x-y}{t^{\frac{1}{2\mu}}} \right)  \sim
\frac{1}{t^\frac{N}{2\mu}}
H_{\mu}\left(\frac{x-y}{t^{\frac{1}{2\mu}}}\right),
\end{equation}
where
\begin{equation}\label{eq:Hmu(z)}
H_{\mu}(z)=  \min\left\{1, \frac{1}{|z|^{N+2\mu}}\right\}
\sim I_{\mu}(z) = \frac{1}{(1+|z|^{2})^{\frac{N+2\mu}{2}}}, \quad z
\in \R^{N} .
\end{equation}
    The profile $k_{0,\mu}$ is even and satisfies
  \begin{displaymath}
    (-\Delta)^{\mu} k_{0,\mu}= \frac{x}{2\mu}\nabla k_{0,\mu} +
    \frac{N}{2\mu} k_{0,\mu} ,
  \end{displaymath}
see \cite[Section 6]{C-RB-scaling3-4},
\cite{2019Bogdan_fract_laplac_hardy,2017BonforteSireVazquez:_optimal-existence}.
Since the kernel is $C^{\infty}$ so the solution of the fractional
heat equation in (\ref{eq:fractional_semigroup_kernel_L1U}) is
$C^{\infty}((0,\infty)\times \R^{N})$, see
\cite{2017BonforteSireVazquez:_optimal-existence}.

\section{The fractional Schrödinger semigroup}
\label{sec:fractional-schrodinger-semigroup}

In this section we study the perturbed equations
\begin{displaymath}
  \begin{cases}
    u_{t} + (-\Delta)^{\mu} u + V(x) u =0, \quad  x\in \R^{N}, \ t>0
    \\
u(x,0)= u_{0}(x), \quad  x\in \R^{N}
  \end{cases}
\end{displaymath}
for potentials in the uniform space $L_U^{p_{0}}(\R^N)$, wit
$p_{0}\geq 1$, that is,
satisfying
\begin{displaymath}
  \|V \|_{L_U^{p_{0}}(\R^N)}=\sup_{x_0\in\R^N}\|V
  \|_{L^{p_{0}}(B(x_0,1))} < \infty.
\end{displaymath}

\begin{proposition}\label{prop:nonegative_potential}

 Assume $0\leq V \in L^{p_{0}}_{U}(\R^{N})$, for $p_{0} >\frac{N}{2\mu}$.

 Then $-(-\Delta)^\mu - V(x)$  defines an order preserving
 semigroup of contractions in
 $L^{p}(\R^{N})$ for any $1\leq p\leq  \infty$ which is strongly continuous
 if $1\leq p<\infty$ and  analytic  for
 $1<p<\infty$ and that we denote $\{S_{\mu,V }(t)\}_{t\geq 0}$.

 These semigroups are consistent in the sense that the semigroup in
 $L^{p}(\R^{N})$ and in $L^{q}(\R^{N})$  give the same result for any
 $t>0$ if $u_{0}\in L^{p}(\R^{N}) \cap L^{q}(\R^{N})$.

 Also the semigroup in $L^{p'}(\R^{N})$ is the adjoint of the
 semigroup in $L^{p}(\R^{N})$ for $1\leq p< \infty$ and  $\frac{1}{p}+
 \frac{1}{p'}=1$. 

\end{proposition}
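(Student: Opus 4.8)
The plan is to build the semigroup $\{S_{\mu,V}(t)\}$ as a perturbation of the fractional semigroup $\{S_\mu(t)\}$ by the nonnegative multiplication operator $V$, exploiting that the hypothesis $p_0>\frac{N}{2\mu}$ makes $V$ a subordinate (indeed infinitesimally small) perturbation in a suitable sense. First I would work in $L^2(\R^N)$, where the natural object is the symmetric, nonnegative, closed quadratic form
\begin{displaymath}
  \mathfrak{a}(u,v) = \big((-\Delta)^{\mu/2}u,(-\Delta)^{\mu/2}v\big)_{L^2}
  + \int_{\R^N} V\,u\,\bar v\,dx ,
\end{displaymath}
with form domain $H^\mu(\R^N)\cap L^2(V\,dx)$. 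To see this form is well defined and closed I need the embedding of $H^\mu(\R^N)$ into the local $L^{p_0'}$-type space dual to $L^{p_0}_U$; concretely, a Gagliardo--Nirenberg/Sobolev estimate on unit balls together with the covering structure of $L^{p_0}_U$ gives, for any $\eps>0$, an estimate $\int V|u|^2 \le \eps\,\|(-\Delta)^{\mu/2}u\|_{L^2}^2 + C_\eps\|u\|_{L^2}^2$ precisely when $p_0>\frac{N}{2\mu}$. Hence $\mathfrak{a}$ generates a self-adjoint operator $-(-\Delta)^\mu - V$ in $L^2$, and by the Beurling--Deny criteria (the form is a Dirichlet form: $V\ge0$ is a pure multiplier and $(-\Delta)^\mu$ already generates a sub-Markovian semigroup) the associated semigroup is positivity preserving and $L^\infty$-contractive, hence (being symmetric) contractive on all $L^p$, $1\le p\le\infty$, by interpolation and duality. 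Self-adjointness and positivity also give analyticity on $L^2$.

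Next I would extrapolate this $L^2$ semigroup consistently to all $L^p(\R^N)$. Since $S_{\mu,V}(t)$ is simultaneously a contraction on $L^1$ and on $L^\infty$ (the latter from sub-Markovianity, the former by duality using symmetry), Riesz--Thorin yields a consistent family of contraction semigroups on every $L^p$, $1\le p\le\infty$; consistency on the overlap $L^p\cap L^q$ is automatic because all of them restrict to the same operators on the dense common subspace $L^1\cap L^\infty$. Strong continuity for $1\le p<\infty$ follows from strong continuity on $L^2$ together with the uniform contraction bound and density of $L^2\cap L^p$ in $L^p$; it fails at $p=\infty$ for the usual reasons (already $S_\mu(t)$ is not strongly continuous there). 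Analyticity for $1<p<\infty$ I would get by interpolating the analytic $L^2$ semigroup with the contraction semigroups at the endpoints (Stein interpolation for analytic families, or the standard fact that a consistent family that is analytic on $L^2$ and bounded on $L^1,L^\infty$ is analytic on the open scale), or alternatively directly via a perturbation-of-sectorial-operators argument using the smoothing estimate (\ref{eq:fractional_smoothing_LpLq}) to show $V$ is a relatively bounded perturbation of $(-\Delta)^\mu$ on $L^p$ with relative bound $0$. Order preservation on each $L^p$ is inherited from the $L^2$ positivity. Finally, the adjoint statement is immediate: since $\mathfrak{a}$ is symmetric the $L^2$ generator is self-adjoint, so on $L^p$ and $L^{p'}$ the two semigroups are formal adjoints of each other, and this persists under the extrapolation because the $L^1$--$L^\infty$ duality pairing is preserved at each step.

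The main obstacle is the form-boundedness (equivalently relative-boundedness) estimate that encodes the threshold $p_0>\frac{N}{2\mu}$: one must show that a locally uniformly $L^{p_0}$ potential is infinitesimally form-bounded with respect to $(-\Delta)^\mu$ on $\R^N$, uniformly in the base point of the unit balls. This is where the exponent condition is genuinely used — $p_0'< \frac{N}{N-2\mu}$ is exactly the range in which $H^\mu(B(x_0,1))\hookrightarrow L^{2p_0'}(B(x_0,1))$, so H\"older's inequality on each ball gives $\int_{B(x_0,1)}V|u|^2\le \|V\|_{L^{p_0}_U}\|u\|_{L^{2p_0'}(B(x_0,1))}^2 \lesssim \|V\|_{L^{p_0}_U}\big(\eps\|(-\Delta)^{\mu/2}u\|_{L^2(B(x_0,1))}^2 + C_\eps\|u\|_{L^2(B(x_0,1))}^2\big)$, and summing over a locally finite cover of $\R^N$ by unit balls (absorbing the overlap multiplicity into the constants) yields the global infinitesimal bound. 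Everything after that is standard semigroup and interpolation machinery; I would also remark that consistency makes it legitimate to write $S_{\mu,V}(t)$ without reference to $p$.
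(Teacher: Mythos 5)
Your proposal follows essentially the same route as the paper: the symmetric nonnegative form $a(\phi,\psi)=\int_{\R^N}(-\Delta)^{\mu/2}\phi\,(-\Delta)^{\mu/2}\psi+\int_{\R^N}V\phi\psi$ on $H^{\mu}(\R^N)$, made well defined by a H\"older--Sobolev estimate over a uniformly locally finite cover (the paper uses unit cubes and Lemma \ref{lem:get_nomrs_in_cubes}, you use unit balls), then the Beurling--Deny criteria (the paper verifies them explicitly via the generalised Stroock--Varopoulos inequality \eqref{eq:general_strook_varopoulos}, you invoke sub-Markovianity of $S_{\mu}(t)$ together with $V\ge 0$), and finally Davies-type extrapolation to all $L^{p}(\R^N)$ for contractivity, order preservation, consistency, strong continuity, analyticity and the adjoint relation. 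The argument is correct and matches the paper's proof in all essential respects.
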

\begin{proof}
 Consider the bilinear symmetric, nonnegative
 definite form
 \begin{displaymath}
   a(\phi, \psi) = \int_{\R^{N}} (-\Delta)^{\frac{\mu}{2}} \phi
   (-\Delta)^{\frac{\mu}{2}} \psi + \int_{\R^{N}} V(x) \phi \psi .
 \end{displaymath}

We first prove that  is well  defined in the Bessel space $H^{\mu}(\R^{N})$. For
this,   denote by $\{Q_{i}\}$ the family of cubes centered at points of
integer coordinates in $\R^{N}$ and with edges of length 1 parallel to
the axes. Thus if $V \in L^{p_{0}}_{U}(\R^{N})$,  for any
$\phi,\psi  \in  H^{\mu}(\R^{N})$
we have, using H\"older's inequality,
$$
\int_{\R^{N}} |V(x) \phi \psi|  =
\sum_{i} \int_{Q_{i}}  |V(x)| |\phi| |\psi| \leq \sum_{i}
\|V\|_{L^{p_{0}}(Q_{i})} \|\phi\|_{L^{r}(Q_{i})} \|\psi\|_{L^{r}(Q_{i})}
$$
with $\frac{1}{p_0} + \frac{2}{r}=1$ and $H^{\mu}(\R^{N}) \subset
L^{r}(\R^{N})$, that is   $\mu-\frac{N}{2} \geq -\frac{N}{r}$. This implies in
turn that we must have   $p_{0} \geq  \frac{N}{2\mu}$ (with strict
inequality if $p_{0}=1$ which implies $r=\infty$). 
Hence, using the  embedding
$H^{\mu}(Q_{i}) \subset L^{r}(Q_{i})$, with constants
independent of $i$, we get
\begin{displaymath}
  \int_{\R^{N}} |V(x)| |\phi | |\psi| \leq C \|V\|_{L^{p_{0}}_{U}(\R^{N})} \sum_{i}
\|\phi\|_{L^{r}(Q_{i})} \|\psi\|_{L^{r}(Q_{i})}
\end{displaymath}
\begin{displaymath}
\leq
C \|V\|_{L^{p_{0}}_{U}(\R^{N})}   \Big(\sum_{i}
\|\phi\|^{2}_{H^{\mu}(Q_{i})}\Big)^{\frac12} \Big(\sum_{i}  \|\psi\|^{2}_{H^{\mu}(Q_{i})}\Big)^{\frac12} \leq C
  \|V\|_{L^{p_{0}}_{U}(\R^{N})} \|\phi\|_{H^{\mu}(\R^{N})} \|\psi\|_{H^{\mu}(\R^{N})} ,
\end{displaymath}
by Lemma \ref{lem:get_nomrs_in_cubes} below.

Now  if $\phi \in H^{\mu}(\R^{N})$ then $|\phi| \in
H^{\mu}(\R^{N})$. To see this  we will use the generalised
 Strook-Varopoulos inequality, see \cite[Lemma
 5.2]{2012_PabloQuirosRodVazquez:_gener_fract_porous_medium_equat},
so  for $g(\phi), (-\Delta)^{\mu} \phi \in L^{2}(\R^{N})$
 \begin{equation} \label{eq:general_strook_varopoulos} \int_{\R^{N}}
   g(\phi) (-\Delta)^{\mu} \phi \geq \int_{\R^{N}} |
   (-\Delta)^{\frac{\mu}{2}} G(\phi)|^{2} \geq 0
 \end{equation}
 with $g'= (G')^{2}$. In particular, with $g(s)= s$ and
 $G(s) = |s|$ we get for
 $\phi, (-\Delta)^{\mu} \phi \in L^{2}(\R^{N})$, that is, for $\phi
 \in H^{2\mu}(\R^{N})$
 \begin{displaymath}
   \int_{\R^{N}} \phi (-\Delta)^{\mu} \phi \geq
   \int_{\R^{N}} | (-\Delta)^{\frac{\mu}{2}}
   |\phi||^{2} \geq 0
 \end{displaymath}
and we get, for $\phi \in H^{2\mu}(\R^{N})$
 \begin{displaymath}
   \int_{\R^{N}} |  (-\Delta)^{\frac{\mu}{2}} |\phi||^{2} \leq
   \int_{\R^{N}} \phi (-\Delta)^{\mu} \phi  =   \int_{\R^{N}}
   |(-\Delta)^{\frac{\mu}{2}} \phi|^{2} .
 \end{displaymath}
 For $\phi \in H^{\mu}(\R^{N})$, by density, if $\phi_{n} \in H^{2\mu}(\R^{N})$
  converges in $H^{\mu}(\R^{N})$ to $\phi$, the inequality above
  and $\phi_{n} \to \phi$ in $L^{2}(\R^{N})$ we can assume  that
  $|\phi_{n}| \to |\phi|$ weakly in $H^{\mu}(\R^{N})$ and we get for  $\phi \in H^{\mu}(\R^{N})$

 \begin{displaymath}
   \int_{\R^{N}} |  (-\Delta)^{\frac{\mu}{2}} |\phi||^{2} \leq
   \int_{\R^{N}}
   |(-\Delta)^{\frac{\mu}{2}} \phi|^{2} .
 \end{displaymath}
Thus $|\phi| \in H^{\mu}(\R^{N})$.

 In particular, if $V\geq 0$ we have
 \begin{equation}\label{eq:bilinear_1}
   a(|\phi|, |\phi|) \leq a(\phi, \phi), \qquad \phi \in
   H^{\mu}(\R^{N}).
 \end{equation}

 On the other hand, if $0\leq \phi \in H^{\mu}(\R^{N})$ then
 $v= \min\{\phi, 1\} \in H^{\mu}(\R^{N})$. To see this observe that
 $v=g(\phi)$ with $g$ Lipschitz and $g'(s)= \Car_{[0,1]}(s)$. Then in
 (\ref{eq:general_strook_varopoulos}) we have $G(s)=g(s)$ and for
 smooth $\phi$
 \begin{displaymath}
   \int_{\R^{N}} |  (-\Delta)^{\frac{\mu}{2}} g(\phi)|^{2} \leq
   \int_{\R^{N}}  g(\phi)  (-\Delta)^{\mu} \phi = \int_{\R^{N}}
   (-\Delta)^{\frac{\mu}{2}} g(\phi)   (-\Delta)^{\frac{\mu}{2}}
   \phi
 \end{displaymath}
 and Hölder's inequality gives
 \begin{displaymath}
   \int_{\R^{N}} |  (-\Delta)^{\frac{\mu}{2}} g(\phi)|^{2} \leq \int_{\R^{N}}
   |(-\Delta)^{\frac{\mu}{2}} \phi|^{2}, \quad \phi \in
   H^{2\mu}(\R^{\N}) .
 \end{displaymath}
Thus $g(\phi) \in H^{\mu}(\R^{N})$.  By density, if $\phi_{n} \in H^{2\mu}(\R^{N})$
  converges in $H^{\mu}(\R^{N})$ to $\phi$, the inequality above
  and $g(\phi_{n}) \to g(\phi)$ in $L^{2}(\R^{N})$ we can assume  that
  $g(\phi_{n}) \to g(\phi)$ weakly in $H^{\mu}(\R^{N})$ and we
  extend the inequality to $\phi \in H^{\mu}(\R^{N})$.

In particular,  if $V\geq 0$ we have
 \begin{equation}\label{eq:bilinear2}
   a(v, v) \leq a(\phi, \phi), \qquad \phi \in
   H^{\mu}(\R^{N}).
 \end{equation}

   Then from (\ref{eq:bilinear_1}) and  (\ref{eq:bilinear2}),
   \cite[Theorem 1.3.2, pag 12]{davies89:_heat}  and
   \cite[Theorem 1.3.3, pag 14]{davies89:_heat}, we have an order
   preserving
 semigroup of contractions in $L^{p}(\R^{N})$ for
 $1\leq p \leq \infty$. Moreover, from \cite[Theorem 1.4.1, pag
 22]{davies89:_heat} the semigroup in $L^{p}(\R^{N})$ is the
 extension of the one in $L^{2}(\R^{N})$,  the semigroup in
 $L^{p}(\R^{N})$ is the adjoint of the semigroup in
 $L^{p'}(\R^{N})$ for $1<p\leq \infty$, where $\frac{1}{p}+
 \frac{1}{p'} =1$. Also, the semigroup is analytic for $1<p<\infty$
 and strongly continuous for $1\leq p<\infty$.
 \end{proof}

The following result, used above,   was proved in Lemma 2.4 in
\cite{ACDRB02_attractors}.

\begin{lemma}\label{lem:get_nomrs_in_cubes}

  Let $\{Q_{i}\}$ be the family of cubes centered at points of integer
  coordinates in $\R^{N}$ and with edges of length 1 parallel to the
  axes.

  Then for any $0\leq s\leq 2$ and $1<p<\infty$
$$
\sum_{i} \|\phi\|^{p}_{H^{s}_{p}(Q_{i})} \leq C
\|\phi\|^{p}_{H^{s}_{p}(\R^{N})} \qquad \mbox{for all} \quad
\phi \in H^{s}_{p} (\R^{N}).
$$
\end{lemma}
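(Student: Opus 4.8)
The plan is to reduce the stated inequality to a single \emph{localization estimate} on $\R^{N}$ by means of a smooth partition of unity, dispose of integer $s$ by Leibniz' rule, and reach the non-integer values of $s$ by complex interpolation. Fix $\psi_{0}\in C^{\infty}_{c}(\R^{N})$ with $\supp\psi_{0}\subset(-1,1)^{N}$ and $\sum_{j\in\Z^{N}}\psi_{0}(x-j)\equiv 1$ (such a $\psi_{0}$ is obtained by normalising any nonnegative bump that is positive on $[-\tfrac12,\tfrac12]^{N}$ by its $\Z^{N}$-periodisation), and set $\psi_{j}:=\psi_{0}(\cdot-j)$ and $\widetilde{Q}_{j}:=(-1,1)^{N}+j\supset\supp\psi_{j}$, a cube of side $2$ centred at $j$. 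For each $i$ let $N(i):=\{j\in\Z^{N}:\widetilde{Q}_{j}\cap Q_{i}\neq\emptyset\}$; then $\#N(i)=3^{N}$ for every $i$, the relation ``$j\in N(i)$'' is symmetric in $i$ and $j$, and the family $\{\widetilde{Q}_{j}\}$ has overlap at most $2^{N}$. Since $\sum_{j\in N(i)}\psi_{j}\equiv1$ on $Q_{i}$, the triangle inequality in $H^{s}_{p}(Q_{i})$ together with the fact that restriction to $Q_{i}$ does not increase the $H^{s}_{p}$-norm gives
\[
 \|\phi\|_{H^{s}_{p}(Q_{i})}\leq\sum_{j\in N(i)}\|\phi\psi_{j}\|_{H^{s}_{p}(\R^{N})}.
\]
Raising this to the $p$-th power (using $\#N(i)=3^{N}$ and Jensen), summing over $i$, and interchanging the order of summation (each $j$ lies in exactly $3^{N}$ of the sets $N(i)$), the lemma reduces to
\[
 \sum_{j\in\Z^{N}}\|\phi\psi_{j}\|_{H^{s}_{p}(\R^{N})}^{p}\leq C\,\|\phi\|_{H^{s}_{p}(\R^{N})}^{p},\qquad 0\leq s\leq 2 .
\]

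I would then prove this last estimate first for $s\in\{0,1,2\}$. For $s=0$ it is immediate, $\sum_{j}\|\phi\psi_{j}\|_{L^{p}}^{p}\leq\sum_{j}\int_{\widetilde{Q}_{j}}|\phi|^{p}\leq 2^{N}\|\phi\|_{L^{p}(\R^{N})}^{p}$. For $s=1,2$ one uses $H^{k}_{p}(\R^{N})=W^{k,p}(\R^{N})$ and Leibniz' rule: every $D^{\alpha}(\phi\psi_{j})$ with $|\alpha|\leq k$ is a finite sum of terms $D^{\beta}\phi\,D^{\alpha-\beta}\psi_{j}$ with $\|D^{\alpha-\beta}\psi_{j}\|_{\infty}=\|D^{\alpha-\beta}\psi_{0}\|_{\infty}$ independent of $j$, so $\|\phi\psi_{j}\|_{H^{k}_{p}(\R^{N})}^{p}\lesssim\sum_{|\gamma|\leq k}\|D^{\gamma}\phi\|_{L^{p}(\widetilde{Q}_{j})}^{p}$; summing over $j$ and using the bounded overlap of $\{\widetilde{Q}_{j}\}$ yields $\sum_{j}\|\phi\psi_{j}\|_{H^{k}_{p}}^{p}\lesssim\|\phi\|_{H^{k}_{p}(\R^{N})}^{p}$. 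For non-integer $s$ I would regard $R\phi:=(\phi\psi_{j})_{j\in\Z^{N}}$ as a linear map, bounded from $H^{k}_{p}(\R^{N})$ into the constant-fibre space $\ell^{p}(\Z^{N};H^{k}_{p}(\R^{N}))$ for $k\in\{0,2\}$; since $[L^{p}(\R^{N}),H^{2}_{p}(\R^{N})]_{\theta}=H^{2\theta}_{p}(\R^{N})$ and complex interpolation commutes with $\ell^{p}$-sums over a fixed Banach space, i.e.\ $[\ell^{p}(X_{0}),\ell^{p}(X_{1})]_{\theta}=\ell^{p}([X_{0},X_{1}]_{\theta})$, the interpolation theorem for operators gives that $R$ is bounded from $H^{s}_{p}(\R^{N})$ into $\ell^{p}(\Z^{N};H^{s}_{p}(\R^{N}))$ for every $s=2\theta\in[0,2]$, which is exactly the estimate needed.

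The only delicate point I foresee is this interpolation step: one must invoke the identification of the complex interpolation scale of $L^{p}$-Bessel spaces on $\R^{N}$ with $H^{s}_{p}$ (Calderón), and the standard fact that complex interpolation commutes with $\ell^{p}$-valued sequence spaces with constant fibres (see, e.g., Triebel's monograph). All the remaining ingredients — the construction of the partition of unity, the Leibniz estimate, and the combinatorics of the $N(i)$ — are elementary, and all the constants involved are independent of $i$ and $j$ precisely because the cubes $Q_{i}$, $\widetilde{Q}_{j}$ and the cut-offs $\psi_{j}$ are translates of fixed objects.
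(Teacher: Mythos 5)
The paper offers no proof of this lemma at all: it is quoted verbatim from Lemma 2.4 of \cite{ACDRB02_attractors}, so there is no in-paper argument to compare yours against. Your proof is correct and self-contained, and it follows the route one would expect for such a localization estimate. The combinatorics are right ($j\in N(i)$ iff $|j_k-i_k|\le 1$ in each coordinate, so $\#N(i)=3^N$, the relation is symmetric, and the $\widetilde Q_j$ overlap at most $2^N$ times), the reduction to $\sum_j\|\phi\psi_j\|^p_{H^s_p(\R^N)}\lesssim\|\phi\|^p_{H^s_p(\R^N)}$ via Jensen and Fubini on the index sets is clean, the Leibniz computation settles $s=0,2$, and recasting the family of multiplications as a single operator $R$ into the \emph{constant-fibre} space $\ell^p(\Z^N;H^k_p(\R^N))$ is exactly the right device to make the interpolation step routine: one then only needs Calder\'on's identity $[L^p,H^2_p]_\theta=H^{2\theta}_p$ and the standard fact $[\ell^p(X_0),\ell^p(X_1)]_\theta=\ell^p([X_0,X_1]_\theta)$, avoiding any interpolation of sequence spaces with varying fibres $H^s_p(Q_i)$. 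The one point you should make explicit is that your very first inequality, and the phrase ``restriction to $Q_i$ does not increase the norm,'' presuppose that $\|\cdot\|_{H^s_p(Q_i)}$ is the restriction (quotient) norm $\inf\{\|\Phi\|_{H^s_p(\R^N)}:\Phi|_{Q_i}=\phi\}$; that is the standard convention for Bessel spaces on domains and the one under which the paper's uniform-in-$i$ Sobolev embeddings on the $Q_i$ make sense, but if an intrinsic norm were intended one would additionally invoke its uniform equivalence with the quotient norm on a fixed unit cube, the uniformity coming from translation invariance.
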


    The next result shows in particular that, for suitable  $p$, the semigroup
    $\{S_{\mu,V}(t)\}_{t\geq 0}$ above in $L^{p}(\R^{N})$ can be
    represented in terms of the  fractional semigroup
    $\{S_{\mu}(t)\}_{t\geq 0}$  and the variation of constants formula
    (a.k.a. Duhamel's principle), at least for smooth initial data.

    \begin{proposition}
      \label{prop:VCF_with_uniform_potential}

For $0\leq V \in L^{p_{0}}_U(\R^N)$ with $p_{0}>\max\{\frac N{2\mu}, 1\}$, the
semigroup $u(t)= S_{\mu,V}(t)u_{0}$ satisfies the following.

\begin{enumerate}
\item
  For $1<p <\infty$ and $p\leq p_{0}$, the operator $-(-\Delta)^{\mu} - V$ with
  domain $D((-\Delta)^{\mu} ) = H^{2\mu}_{p}(\R^{N})$ is a
sectorial operator and it is the generator of the analytic semigroup $S_{\mu,
  V}(t)$. Moreover, for  $u_{0}\in H^{2\mu}_{p}(\R^{N})$, we have
$u\in C([0,\infty), H^{2\mu}_{p}(\R^{N}))\cap C^{1}((0,\infty),
L^{p}(\R^{N}))$ and satisfies the variations of constants formula 
\begin{displaymath}
 u(t) = S_{\mu}(t) u_{0} - \int_{0}^{t}  S_{\mu}(t-s) V u(s) \,
 ds, \quad  t>0.
\end{displaymath}

\item
For $1\leq p \leq \infty$ and  $u_{0}\in
L^{p}(\R^{N})$,
  \begin{displaymath}
    |S_{\mu,V}(t) u_{0}| \leq S_{\mu,V}(t) |u_{0}|  \leq S_{\mu}(t)
    |u_{0}| .
  \end{displaymath}
In particular, the semigroup $S_{\mu,V}(t) $ satisfies the smoothing
estimates (\ref{eq:fractional_smoothing_LpLq}).

\item
For $1\leq p \leq \infty$ and   $0\leq u_{0}\in L^{p}(\R^{N})$,
 if $V_{1} \geq V_{2}$ then for $u_{0} \geq 0$ we have
 $0\leq S_{\mu,V_{1}}(t)u_{0} \leq S_{\mu,V_{2}}(t)u_{0}$.
\end{enumerate}
   \end{proposition}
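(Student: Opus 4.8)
The plan is to prove the three items in order, each building on the previous one and on the abstract semigroup theory already set up in Proposition \ref{prop:nonegative_potential}. For item (i), the idea is to realise $-(-\Delta)^{\mu}-V$ as a bounded perturbation of the sectorial operator $-(-\Delta)^{\mu}$ on $L^{p}(\R^{N})$ with $D((-\Delta)^{\mu})=H^{2\mu}_{p}(\R^{N})$. The key observation is that multiplication by $V\in L^{p_{0}}_{U}(\R^{N})$ maps $H^{2\mu}_{p}(\R^{N})$ continuously into $L^{p}(\R^{N})$ when $p\le p_{0}$: working cube by cube with $\{Q_{i}\}$, Hölder's inequality gives $\|V\phi\|_{L^{p}(Q_{i})}\le \|V\|_{L^{p_{0}}(Q_{i})}\|\phi\|_{L^{r}(Q_{i})}$ with $\frac1p=\frac1{p_{0}}+\frac1r$, the Sobolev embedding $H^{2\mu}_{p}(Q_{i})\subset L^{r}(Q_{i})$ holds with constants independent of $i$ (here one needs $2\mu>\frac{N}{2\mu}\cdot$ --- more precisely $2\mu-\frac{N}{p}\ge -\frac{N}{r}$, which follows from $p_{0}>\frac{N}{2\mu}$), and then Lemma \ref{lem:get_nomrs_in_cubes} reassembles the $\ell^{p}$-sum of the local norms into $\|\phi\|_{H^{2\mu}_{p}(\R^{N})}$. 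Thus $V:\, H^{2\mu}_{p}\to L^{p}$ is bounded, in fact with arbitrarily small relative bound after splitting $V=V\chi_{\{|V|\le M\}}+V\chi_{\{|V|>M\}}$ and using smoothing/interpolation for the small tail; so by the standard perturbation theorem for sectorial operators (e.g. Lunardi), $-(-\Delta)^{\mu}-V$ is sectorial on $L^{p}(\R^{N})$ with the same domain and generates an analytic semigroup, which by uniqueness of generators and the consistency in Proposition \ref{prop:nonegative_potential} must coincide with $S_{\mu,V}(t)$. The variation of constants formula and the regularity $u\in C([0,\infty),H^{2\mu}_{p})\cap C^{1}((0,\infty),L^{p})$ are then the usual consequences of analytic semigroup theory, since $s\mapsto Vu(s)$ is continuous into $L^{p}$ for $u_{0}\in H^{2\mu}_{p}$.

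For item (ii), the pointwise domination $|S_{\mu,V}(t)u_{0}|\le S_{\mu,V}(t)|u_{0}|$ is immediate from the order-preserving property in Proposition \ref{prop:nonegative_potential} applied to $\pm u_{0}\le|u_{0}|$. The second inequality $S_{\mu,V}(t)|u_{0}|\le S_{\mu}(t)|u_{0}|$ I would prove first for smooth nonnegative data $u_{0}\in H^{2\mu}_{p}(\R^{N})\cap L^{\infty}$ with $p\le p_{0}$, $1<p<\infty$: from the variation of constants formula in (i) and the positivity of $S_{\mu}(t)$ and of $V$, we get $S_{\mu,V}(t)u_{0}=S_{\mu}(t)u_{0}-\int_{0}^{t}S_{\mu}(t-s)Vu(s)\,ds\le S_{\mu}(t)u_{0}$ provided $u(s)=S_{\mu,V}(s)u_{0}\ge 0$, which holds by the order preservation. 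Then one passes to general $u_{0}\in L^{p}(\R^{N})$, $1\le p\le\infty$, by density and the uniform contractivity of both semigroups (for $p=\infty$ one localises and uses the convergence statements from Section \ref{sec:basic-results}). Once the domination by $S_{\mu}(t)$ is in hand, the smoothing estimates (\ref{eq:fractional_smoothing_LpLq}) for $S_{\mu,V}(t)$ follow by applying the estimate to $|u_{0}|$ and using $\||S_{\mu,V}(t)u_{0}|\|_{L^{q}}\le \|S_{\mu}(t)|u_{0}|\|_{L^{q}}\le c_{p,q,\mu}t^{-\frac{N}{2\mu}(\frac1p-\frac1q)}\||u_{0}|\|_{L^{p}}$.

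For item (iii), write $w=S_{\mu,V_{2}}(t)u_{0}-S_{\mu,V_{1}}(t)u_{0}$ for $0\le u_{0}$. Formally $w$ solves $w_{t}+(-\Delta)^{\mu}w+V_{1}w=(V_{2}-V_{1})\,S_{\mu,V_{2}}(t)u_{0}$ with $w(0)=0$, and since $(V_{1}-V_{2})\ge 0$ and $S_{\mu,V_{2}}(t)u_{0}\ge 0$ (by (ii)), the right-hand side is $\le 0$, so the variation of constants formula for $S_{\mu,V_{1}}$ together with its positivity gives $w=-\int_{0}^{t}S_{\mu,V_{1}}(t-s)(V_{1}-V_{2})S_{\mu,V_{2}}(s)u_{0}\,ds\le 0$, i.e. $S_{\mu,V_{1}}(t)u_{0}\le S_{\mu,V_{2}}(t)u_{0}$; nonnegativity of $S_{\mu,V_{1}}(t)u_{0}$ is again order preservation. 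To make this rigorous one should first do it for smooth $u_{0}$ and, if necessary, for truncated potentials $V_{j}\wedge M$ where the perturbed semigroup is generated by a genuinely bounded perturbation and the variation of constants formula is unrestricted (this connects with Proposition \ref{prop:VCF_with_bounded_potential}), then remove the truncation by monotone convergence of the semigroups and finally relax $u_{0}$ by density as in (ii).

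I expect the main obstacle to be item (ii), specifically the passage from smooth data to general $u_{0}\in L^{p}(\R^{N})$ for the full range $1\le p\le\infty$ and the justification that the variation of constants identity is available where it is needed: for $p>p_{0}$ or $p\in\{1,\infty\}$ the operator $-(-\Delta)^{\mu}-V$ need not be sectorial with domain $H^{2\mu}_{p}$, so one cannot invoke (i) directly and must instead establish the domination on a dense subspace (e.g. $H^{2\mu}_{p}\cap H^{2\mu}_{p_{0}}$ with bounded data, or via the bounded-potential approximation of Proposition \ref{prop:approximation_by_bounded_functions}) and then extend by continuity, handling $p=\infty$ through the $L^{p}_{loc}$-convergence already recorded for both $S(t)$ and $S_{\mu}(t)$ in Section \ref{sec:basic-results}. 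Everything else is a routine application of sectorial perturbation theory and the positivity/order structure furnished by Proposition \ref{prop:nonegative_potential}.
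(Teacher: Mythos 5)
Your overall architecture matches the paper's: relative boundedness of $V$ via the cube decomposition, H\"older's inequality, the uniform local Sobolev embedding and Lemma \ref{lem:get_nomrs_in_cubes} for (i); the variation of constants formula plus positivity for the comparisons in (ii) and (iii). There are, however, two concrete gaps. First, in (i) your mechanism for making the relative bound small --- splitting $V=V\chi_{\{|V|\le M\}}+V\chi_{\{|V|>M\}}$ and treating the tail as small --- fails for general $V\in L^{p_{0}}_{U}(\R^{N})$: the tail need not tend to $0$ in $L^{p_{0}}_{U}(\R^{N})$ as $M\to\infty$, because bounded functions are \emph{not} dense in $L^{p_{0}}_{U}(\R^{N})$ (this is precisely part (iv) of Proposition \ref{prop:approximation_by_bounded_functions}; e.g. $V=\sum_{n}n^{N/p_{0}}\Car_{B(x_{n},1/n)}$). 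The paper avoids any splitting: since $p_{0}>\frac{N}{2\mu}$ strictly, the local embedding already holds at the level $H^{2\mu s}_{p}$ for some $s<1$, whence $\|V\phi\|_{L^{p}}\le C\|V\|_{L^{p_{0}}_{U}}\|\phi\|_{H^{2\mu s}_{p}}\le C\|V\|_{L^{p_{0}}_{U}}\|\phi\|^{s}_{H^{2\mu}_{p}}\|\phi\|^{1-s}_{L^{p}}$ by interpolation, and Young's inequality gives an arbitrarily small relative bound for the whole of $V$ at once.

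Second, in (ii) and (iii) the case $p=\infty$ cannot be reached by density or localisation as you suggest: $C^{\infty}_{c}(\R^{N})$ is not dense in $L^{\infty}(\R^{N})$ and the semigroup there is not strongly continuous. The paper's device is duality: by Proposition \ref{prop:nonegative_potential} the semigroup on $L^{\infty}$ is the adjoint of the one on $L^{1}$, so for $0\le u_{0}\in L^{\infty}$ and $0\le\varphi\in L^{1}$ one writes $\< S_{\mu,V}(t)u_{0},\varphi\>=\< u_{0},S_{\mu,V}(t)\varphi\>\le\< u_{0},S_{\mu}(t)\varphi\>=\< S_{\mu}(t)u_{0},\varphi\>$, and argues identically for the comparison of $S_{\mu,V_{1}}$ and $S_{\mu,V_{2}}$. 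Finally, a sign slip in (iii): with $w=S_{\mu,V_{2}}(t)u_{0}-S_{\mu,V_{1}}(t)u_{0}$ the source term is $(V_{1}-V_{2})S_{\mu,V_{2}}(t)u_{0}\ge 0$, so $w\ge 0$; your intermediate claim $w\le 0$ contradicts your (correct) final conclusion, although the two errors cancel.
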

   \begin{proof}
(i) We show that  the domain of $(-\Delta)^{\mu} + V$ coincides
with the domain $D((-\Delta)^{\mu} ) = H^{2\mu}_{p}(\R^{N})$ and is a
sectorial operator.
Actually,     denote by $\{Q_{i}\}$ the family of cubes centered at points of
  integer coordinates in $\R^{N}$ and with edges of length 1 parallel
  to the axes. Thus if $V \in L^{p_{0}}_{U}(\R^{N})$, for any
  $\phi \in H^{2\mu}_{p}(\R^{N})$ we have, using H\"older's inequality,
$$
\|V\phi \|_{L^{p}(\R^{N})}^{p} = \int_{\R^{N}} |V|^{p} |\phi |^{p} =
\sum_{i} \int_{Q_{i}} |V|^{p} |\phi |^{p} \leq \sum_{i}
\|V\|^{p}_{L^{p_{0}}(Q_{i})} \|\phi\|^{p}_{L^{r}(Q_{i})}
$$
provided we chose $r$ such that $\frac{1}{p} = \frac{1}{p_{0}} + \frac{1}{r}$, which is possible
since $p_{0}\geq p$.  Now we use the Sobolev  embedding
\begin{displaymath}
  \|\phi\|_{L^{r}(Q_{i})} \leq C
  \|\phi\|_{H^{2\mu s}_{p}(Q_{i})}
\end{displaymath}
with constants independent of $i$, for $-\frac{N}{r} \leq 2\mu s -\frac{N}{p}$,
which requires $\frac{N}{p_{0}} \leq 2\mu s $. Hence we get, using
Lemma \ref{lem:get_nomrs_in_cubes},
\begin{equation} \label{eq: potential_relatively_bounded_H2mus}
\|V\phi \|_{L^{p}(\R^{N})}^{p} \leq C \|V\|_{L^{p_{0}}_{U}(\R^{N})}^{p}
\sum_{i} \|\phi\|^{p}_{H^{2\mu s}_{p}(Q_{i})} \leq C
\|V\|_{L^{p_{0}}_{U}(\R^{N})}^{p} \|\phi\|^{p}_{H^{2\mu
    s}_{p}(\R^{N})} .
\end{equation}

Then, since $p_{0} >\frac{N}{2\mu}$ we can take $0<s<1$ above and then by
interpolation
\begin{displaymath}
  \|V\phi \|_{L^{p}(\R^{N})} \leq C \|V\|_{L^{p_{0}}_{U}(\R^{N})}
  \|\phi\|_{H^{2\mu s}_{p}(\R^{N})} \leq  C \|V\|_{L^{p_{0}}_{U}(\R^{N})}
  \|\phi\|^{s}_{H^{2\mu}_{p}(\R^{N})}  \|\phi
  \|_{L^{p}(\R^N)}^{1-s}
\end{displaymath}
and then, for any $\eps>0$,
\begin{equation} \label{eq:potential_relatively_bounded}
  \|V\phi \|_{L^{p}(\R^{N})} \leq
  \eps  \|(-\Delta)^{\mu} \phi \|_{L^{p}(\R^N)}
  + C_{\eps} \|\phi \|_{L^{p}(\R^N)} , \qquad \phi \in
  H^{2\mu}_{p}(\R^{N}).
\end{equation}
The sectoriality result follows from Theorem 2.1 in Chapter 3 in \cite{1983_Pazy}.
The regularity of  $u(t)= S_{\mu,V}(t)u_{0}$ when  $u_{0}\in
H^{2\mu}_{p}(\R^{N})$ also follows from Chapter 1 in
\cite{1983_Pazy}. In this case $u$ satisfies
\begin{displaymath}
  u_{t} = -(-\Delta)^{\mu} u - V u, \quad t>0
\end{displaymath}
and we get the integral representation of $u$ as $Vu\in C([0,\infty),
L^{p}(\R^{N}))$, see  Chapter 4, Section 4.2  in \cite{1983_Pazy}.

\noindent (ii)
Since $ S_{\mu,V}(t) $ is order preserving, it is enough to prove the
second inequality because  for  any order preserving  semigroup in $L^{p}(\R^{N})$,
since $-|u_{0}|\leq u_{0} \leq
  |u_{0}|$ we have
  \begin{displaymath}
    |S(t) u_{0}| \leq S(t) |u_{0}| .
  \end{displaymath}

  First, for $1<p\leq p_{0}$,   if $0\leq u_{0} \in H^{2\mu}_{p}(\R^{N})$ we know that $V\geq 0$, $u(t) =
S_{\mu,V}(t) u_{0}\geq 0$ and $S_{\mu}(t)$ is order preserving and
then by (i)  $\int_{0}^{t}  S_{\mu}(t-s) V u(s) \,        ds \geq 0$
and
\begin{displaymath}
       u(t) = S_{\mu}(t) u_{0} - \int_{0}^{t}  S_{\mu}(t-s) V u(s) \,
       ds\leq S_{\mu}(t) u_{0}, \quad  t>0
     \end{displaymath}
and (ii) holds for such initial data. By density we get the result for
$0\leq u_{0}\in L^{p}(\R^{N})$. For initial data in
$L^{p}(\R^{N})$ with $p_{0}<p<\infty$ the result follows again by
density from the case above.

     Finally, for  $p=\infty$ and $0\leq u_{0} \in L^{\infty}(\R^{N})$, consider
     $0 \leq \varphi \in L^{1}(\R^{N})$ and then
     \begin{displaymath}
       \< S_{\mu, V}(t) u_{0}, \varphi \>  =   \< u_{0}, S_{\mu, V}
       (t) \varphi       \>  \leq   \< u_{0}, S_{\mu}(t)\varphi \>   =
       \<S_{\mu} (t) u_{0}, \varphi \>
     \end{displaymath}
and we get the pointwise inequality.

Now it is clear that the estimates
(\ref{eq:fractional_smoothing_LpLq}) apply to $S_{\mu, V}(t)$.

\noindent (iii)
Again, first for $1<p\leq p_{0}$, if  $0\leq u_{0} \in H^{2\mu}_{p}(\R^{N})$ and $u_{i}(t)= S_{\mu,
  V_{i}}(t) u_{0} \geq 0$,  by (i)    $u_{2}(t)$  satisfies
\begin{displaymath}
  (u_{2})_{t} + (-\Delta)^{\mu} u_{2} + V_{1}(x) u_{2} +
  \big(V_{2}(x)- V_{1}(x)\big) u_{2}  =0, \qquad x\in \R^{N}, \quad
  t>0
\end{displaymath}
and then, since  $0 \leq  \big(V_{1}- V_{2}\big) u_{2} \in C([0,\infty),
L^{p}(\R^{N}))$,
\begin{displaymath}
  u_{2}(t) = S_{\mu, V_{1}}(t) u_{0} + \int_{0}^{t}  S_{\mu,
    V_{1}}(t-s) (V_{1}-V_{2}) u_{2}(s) \,   ds \geq S_{\mu, V_{1}}(t)
  u_{0}, \quad  t>0.
\end{displaymath}
By density,   (iii) is proved for  this range of $p$.  For initial data in
$L^{p}(\R^{N})$ with $p_{0}<p<\infty$ the result follows again by
density from the case above.

Finally, for  $p=\infty$ and $0\leq u_{0} \in L^{\infty}(\R^{N})$, consider
     $0 \leq \varphi \in L^{1}(\R^{N})$ and then
     \begin{displaymath}
       \< S_{\mu, V_{1}}(t) u_{0}, \varphi \>  =   \< u_{0}, S_{\mu, V_{1}}
       (t) \varphi       \>  \leq   \< u_{0}, S_{\mu, V_{2}}(t)\varphi \>   =
       \<S_{\mu, V_{2}} (t) u_{0}, \varphi \>
     \end{displaymath}
and we get the pointwise inequality.
\end{proof}

In the case of a bounded potential
 $0\leq V \in L^{\infty}(\R^{N})$,  part (i) in Proposition
 \ref{prop:VCF_with_uniform_potential} can be somehow improved
and for all $1\leq p\leq \infty$, the semigroup
    $\{S_{\mu,V}(t)\}_{t\geq 0}$ above in $L^{p}(\R^{N})$ can be
    represented by the variation of constants formula for all initial data. Observe
 that actually the sign condition on $V$ is not needed in the proof
 below.

 \begin{proposition}
   \label{prop:VCF_with_bounded_potential}

   Assume $0\leq V \in L^{\infty}(\R^N)$. Then for $1\leq p <\infty$
 the operator $-(-\Delta)^{\mu} - V$ with
  domain $D((-\Delta)^{\mu} ) = H^{2\mu}_{p}(\R^{N})$ is the generator
  of the semigroup $S_{\mu,
    V}(t)$.

  For $1\leq p\leq \infty$ and   $u_{0}\in L^{p}(\R^{N})$, we have
  that $u(t)= S_{\mu, V}(t)u_{0}$ satisfies the variations of constants formula 
     \begin{displaymath}
       u(t) = S_{\mu}(t) u_{0} - \int_{0}^{t}  S_{\mu}(t-s) V u(s) \,
       ds, \quad  t>0.
     \end{displaymath}

\end{proposition}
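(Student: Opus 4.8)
The plan is to treat the bounded potential $V$ as a genuinely bounded perturbation of the generator $-(-\Delta)^\mu$ on each $L^p(\R^N)$, $1\le p\le\infty$, and then derive the variation of constants formula from the standard bounded-perturbation theory of semigroups. First I would recall from Section~\ref{sec:basic-results} that for $1\le p<\infty$ the operator $-(-\Delta)^\mu$ with domain $D((-\Delta)^\mu)=H^{2\mu}_p(\R^N)$ generates the strongly continuous semigroup $S_\mu(t)$ (analytic for $1<p<\infty$, merely bounded and strongly continuous in the general $p$ case), while for $p=\infty$ it generates the bounded analytic but not strongly continuous semigroup $S_\mu(t)$. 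Since $V\in L^\infty(\R^N)$, the multiplication operator $\phi\mapsto V\phi$ is a bounded linear operator on every $L^p(\R^N)$ with norm at most $\|V\|_{L^\infty}$; no sign condition is used. Hence $-(-\Delta)^\mu-V$ with the \emph{same} domain $D((-\Delta)^\mu)=H^{2\mu}_p(\R^N)$ is a bounded perturbation of a generator, so by the standard perturbation theorem (e.g.\ Theorem~1.1 in Chapter~3 of \cite{1983_Pazy}, or Chapter~III of \cite{davies89:_heat}) it generates a strongly continuous semigroup for $1\le p<\infty$; for $1<p<\infty$ analyticity is preserved. I would then identify this semigroup with the $\{S_{\mu,V}(t)\}_{t\ge 0}$ constructed via the form $a(\cdot,\cdot)$ in Proposition~\ref{prop:nonegative_potential}: both are consistent across $p$ and agree on $L^2(\R^N)$ (where the form-generated operator is, on $H^{2\mu}(\R^N)$, exactly $(-\Delta)^\mu+V$), so by uniqueness of the generator they coincide.

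Next I would establish the variation of constants formula. For $1\le p<\infty$ and $u_0\in D((-\Delta)^\mu)$, the standard argument applies directly: the function $w(s)=S_\mu(t-s)u(s)$, with $u(s)=S_{\mu,V}(s)u_0$, is differentiable on $(0,t)$ with $w'(s)=S_\mu(t-s)\big((-\Delta)^\mu u(s)+u_s(s)\big)=-S_\mu(t-s)Vu(s)$, and integrating from $0$ to $t$ gives
\begin{displaymath}
u(t)-S_\mu(t)u_0=-\int_0^t S_\mu(t-s)Vu(s)\,ds .
\end{displaymath}
For general $u_0\in L^p(\R^N)$ with $1\le p<\infty$ I would pass to the limit by density: both sides are continuous in $u_0\in L^p(\R^N)$ uniformly for $s,t$ in compact subsets of $(0,\infty)$, since $S_{\mu,V}(t)$ and $S_\mu(t)$ are contractions on $L^p(\R^N)$ and $V$ is bounded, so the Duhamel identity extends to all of $L^p(\R^N)$.

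The genuinely delicate case is $p=\infty$, since $S_\mu(t)$ is not strongly continuous there, so the pointwise-in-$t$ differentiation argument above is not available, and one cannot approximate an arbitrary $u_0\in L^\infty(\R^N)$ by nicer functions in the $L^\infty$ norm. The plan here is to use duality rather than direct limits. Fix $u_0\in L^\infty(\R^N)$ and a test function $0\le\varphi\in L^1(\R^N)$; since the $L^1$-semigroup is strongly continuous and $\varphi$ can be approximated in $L^1$ by elements of $D((-\Delta)^\mu)\subset L^1(\R^N)$, the Duhamel formula holds for $S_{\mu,V}(\cdot)\varphi$ in $L^1(\R^N)$ by the previous paragraph. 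Then, using the adjoint relations $S_{\mu,V}(t)^*=S_{\mu,V}(t)$ (as $L^1$–$L^\infty$ adjoints, from Proposition~\ref{prop:nonegative_potential}) and $S_\mu(t)^*=S_\mu(t)$, together with Fubini's theorem on $\int_0^t\langle u_0,S_\mu(t-s)V S_{\mu,V}(s)\varphi\rangle\,ds$, I would pair the $L^1$ identity against $u_0$ to obtain
\begin{displaymath}
\langle S_{\mu,V}(t)u_0,\varphi\rangle=\langle S_\mu(t)u_0,\varphi\rangle-\Big\langle\int_0^t S_\mu(t-s)V S_{\mu,V}(s)u_0\,ds,\ \varphi\Big\rangle ,
\end{displaymath}
where the interior integral is a well-defined Bochner integral in $L^\infty(\R^N)$ because $s\mapsto S_\mu(t-s)VS_{\mu,V}(s)u_0$ is bounded and weakly measurable. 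Since $\varphi\ge0$ in $L^1$ is arbitrary, the identity holds pointwise a.e.\ in $\R^N$, which is the claimed variation of constants formula. I expect the main obstacle to be precisely this duality bookkeeping in the $p=\infty$ case — making the Fubini exchange rigorous and verifying that the $L^\infty$-valued Bochner integral is legitimate despite the lack of strong continuity of $S_\mu$ on $L^\infty(\R^N)$ — but the measurability and the uniform boundedness of the integrand make this routine.
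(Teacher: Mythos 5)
Your overall strategy is sound and reaches the same conclusion, but the route differs from the paper's in the delicate $p=\infty$ case. For $1\leq p<\infty$ you and the paper both invoke bounded-perturbation theory (Pazy, Section 3.1), and your explicit identification of the perturbed generator's semigroup with the form-constructed $S_{\mu,V}(t)$ via consistency and agreement on $L^2$ is a point the paper leaves implicit. For $p=\infty$, however, the paper does not dualize the $L^1$ Duhamel formula directly: it first shows that the map $\mathcal{F}_p(u)(t)=S_\mu(t)u_0-\int_0^t S_\mu(t-s)Vu(s)\,ds$ has a unique fixed point in $L^\infty((0,T),L^p(\R^N))$ for every $1\leq p\leq\infty$ (by contraction on short time intervals plus Gronwall), verifies via the smoothing estimates that $S_{\mu,V}(\cdot)u_0$ is that fixed point when $u_0\in L^p\cap L^\infty$ with $p>\frac{N}{2\mu}$, and then handles general $u_0\in L^\infty$ by approximating the \emph{initial datum} weak-$*$ by truncations $u_0^n\in L^1\cap L^\infty$ and passing to the limit in the weak form of the identity by dominated convergence. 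Your approach is more economical in that it avoids the fixed-point machinery entirely; the paper's buys, as a byproduct, the characterisation of $S_{\mu,V}(t)u_0$ as the unique solution of the integral equation in $L^\infty((0,T),L^\infty(\R^N))$.

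There is one concrete error in your duality step that you must repair. If you pair the $L^1$ identity $S_{\mu,V}(t)\varphi=S_\mu(t)\varphi-\int_0^t S_\mu(t-s)VS_{\mu,V}(s)\varphi\,ds$ against $u_0$ and move the operators across the duality one at a time, you get $\langle u_0,S_\mu(t-s)VS_{\mu,V}(s)\varphi\rangle=\langle S_{\mu,V}(s)\,V\,S_\mu(t-s)u_0,\varphi\rangle$, \emph{not} $\langle S_\mu(t-s)\,V\,S_{\mu,V}(s)u_0,\varphi\rangle$: since $V$, $S_\mu$ and $S_{\mu,V}$ do not commute, your computation produces the reversed Duhamel formula $S_{\mu,V}(t)u_0=S_\mu(t)u_0-\int_0^t S_{\mu,V}(t-s)VS_\mu(s)u_0\,ds$ rather than the one in the statement. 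The fix stays within your framework: dualize instead the \emph{second} Duhamel formula in $L^1$, namely $S_{\mu,V}(t)\varphi=S_\mu(t)\varphi-\int_0^t S_{\mu,V}(t-s)VS_\mu(s)\varphi\,ds$ (which also holds for bounded perturbations of $C_0$-semigroups, by the same differentiation argument applied to $s\mapsto S_{\mu,V}(t-s)S_\mu(s)\varphi$); pairing that one against $u_0$ and changing variables $s\mapsto t-s$ yields exactly the claimed identity. With that correction, and with the integral understood in the weak-$*$ sense against $L^1$ test functions (as the paper also implicitly does, since Bochner measurability in the nonseparable space $L^\infty(\R^N)$ is otherwise an issue), your argument is complete.
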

\begin{proof}
For $1\leq p <\infty$ the semigroup $\{ S_{\mu}(t)\}_{t\geq 0}$ is
strongly continuous and the multiplication by $V$ is a bounded
operator in $L^{p}(\R^{N})$. Hence the result, including the
variations of constants formula, follows from Section 3.1 in
\cite{1983_Pazy}.

Before dealing with the case $p=\infty$ observe first  that for $1\leq
p\leq \infty$ and $T_{0} \| V\|_{L^{\infty}}
<1$ the mapping
     \begin{displaymath}
       \mathcal{F}_{p}(u)(t) =  S_{\mu}(t) u_{0} - \int_{0}^{t}  S_{\mu}(t-s) V u(s) \,
       ds
     \end{displaymath}
     defines a contraction in  $L^{\infty}((0,T_{0} ), L^{p}(\R^{N}))$ as
     \begin{displaymath}
       \| \mathcal{F}_{p} (u)(t)\|_{L^{p}} \leq \|u_{0}\|_{L^{p}} +
       \int_{0}^{t}  \| V\|_{L^{\infty}} \|u(s)\|_{L^{p}} \,       ds
     \end{displaymath}
     so $\mathcal{F}_{p} (u)  \in L^{\infty}((0,T_{0} ), L^{p}(\R^{N}))$
     and
  \begin{displaymath}
       \| \mathcal{F}_{p} (u)(t) - \mathcal{F}_{p} (v)(t)\|_{L^{p}} \leq
       \int_{0}^{t}  \| V\|_{L^{\infty}} \|u(s)- v(s)\|_{L^{p}} \,
       ds \leq T_{0}   \| V\|_{L^{\infty}} \sup_{0\leq t \leq T_{0} }\|u(s) -
       v(s)\|_{L^{p}} .
     \end{displaymath}

Take $u_{1}(t)$ for $0\leq t \leq T_{0} $ the unique fixed point in
$L^{\infty}((0,T_{0} ), L^{p}(\R^{N}))$ and consider for $t\geq
T_{0}$,
\begin{displaymath}
       \mathcal{H}_{p}(u)(t) =  S_{\mu}(t-T_{0})  u_{1}(T_{0} )   - \int_{T_{0} }^{t}  S_{\mu}(t-s) V u(s) \,
       ds .
     \end{displaymath}
     With similar estimates as above it is immediate to get that this
     is also a contraction in $ L^{\infty}((T_{0} ,2T_{0} ),
     L^{p}(\R^{N}))$ and has a unique fixed point $u_{2}(t)$ for $T_{0} \leq t \leq 2T_{0} $. Then it is easy to get that the function
     \begin{displaymath}
       u(t)=
       \begin{cases}
         u_{1}(t), & 0\leq t \leq T_{0}  \\
          u_{2}(t), & T_{0} \leq t \leq 2T_{0}
       \end{cases}
     \end{displaymath}
is a fixed point of  $\mathcal{F}_{p}$ in $L^{\infty}((0,2T_{0} ),
L^{p}(\R^{N}))$. Proceeding by induction we get a fixed point of
$\mathcal{F}_{p}$  in  $L^{\infty}((0,kT_{0} ), L^{p}(\R^{N}))$ for
any $k\in \N$.
The uniqueness of the fixed point for $\mathcal{F}_{p}$  follows from
Gronwall's lemma since for two such fixed points and $t>0$
\begin{displaymath}
       \| u(t) - v(t)\|_{L^{p}} \leq       \int_{0}^{t}  \| V\|_{L^{\infty}} \|u(s)- v(s)\|_{L^{p}} \,
       ds .
     \end{displaymath}

     In particular, for $1\leq p<\infty$ and $T>0$,  $u(t) = S_{\mu,V}(t) u_{0}$ for
$0\leq t \leq T$ is the unique fixed point of $\mathcal{F}_{p}$   in $L^{\infty}((0, T),
L^{p}(\R^{N}))$ for all $T>0$.

Hence,  if $u_{0} \in L^{p}(\R^{N}) \cap L^{\infty}(\R^{N})$ with $p>
\frac{N}{2\mu}$ then the semigroup solution in  $L^{p}(\R^{N})$, $u(t)
= S_{\mu,V}(t) u_{0}$ satisfies, using the variations of constants formula and
(\ref{eq:fractional_smoothing_LpLq}),
\begin{displaymath}
       \| u(t)\|_{L^{\infty}} \leq \|u_{0}\|_{L^{\infty}} +  \int_{0}^{t}  \frac{1}{(t-s)^{\frac{N}{2p\mu}}}\| V\|_{L^{\infty}} \|u(s)\|_{L^{p}} \,       ds
     \end{displaymath}
and therefore for any $T>0$, $S_{\mu,V}(\cdot) u_{0} \in L^{\infty}((0,T),
L^{\infty}(\R^{N}))$ and so it is the fixed point of $\mathcal{F}_{\infty}$ in
$[0,T]$ in $L^{\infty}((0,T), L^{\infty}(\R^{N}))$.

Now for  $u_{0}\in L^{\infty}(\R^{N})$  take a
sequence $u_{0}^{n} \in L^{1}(\R^{N}) \cap L^{\infty}(\R^{N})\subset
L^{p}(\R^{N})$  with $p>\frac{N}{2\mu}$ such that $u_{0}^{n} \to u_{0}$ weak-* in
$L^{\infty}(\R^{N})$ (for example, take the truncation by zero of
$u_{0}$ outside the ball $B(0,n)\subset \R^{N}$). Then for any
$\varphi \in L^{1}(\R^{N})$  and $t\geq 0$, as $n\to \infty$ we have,
since $S_{\mu,V}(t)  \varphi  \in  L^{1}(\R^{N})$,
\begin{displaymath}
  \< S_{\mu,V}(t) u_{0}^{n}, \varphi \> =   \< u_{0}^{n}, S_{\mu,V}(t)
  \varphi \> \to \< u_{0}, S_{\mu,V}(t)  \varphi \> = \< S_{\mu,V}(t) u_{0},  \varphi \>
\end{displaymath}
i.e. $ S_{\mu,V}(t) u_{0}^{n} \to  S_{\mu,V}(t) u_{0}$ weak-* in
$L^{\infty}(\R^{N})$.

On the other hand, setting $u^{n}(t) = S_{\mu,V}(t) u_{0}^{n}$ and
$u(t)= S_{\mu,V}(t) u_{0} $,  we
have,
\begin{displaymath}
  \<u^{n}(t) , \varphi \> = \< u_{0}^{n}, S_{\mu}(t) \varphi\> -
  \int_{0}^{t} \< u^{n}(s),   V S_{\mu}(t-s)\varphi \> .
\end{displaymath}
Therefore, for fixed $t>0$ and for $0<s<t$ we have, as $n\to \infty$,
\begin{displaymath}
\< u^{n}(s),   V S_{\mu}(t-s)\varphi \>
\to \<  u(s),   V S_{\mu}(t-s)\varphi \>
\end{displaymath}
and for all
$n\in \N$
\begin{displaymath}
  |\< u^{n}(s),   V S_{\mu}(t-s)\varphi \> |\leq \|V
  S_{\mu}(t-s)\varphi \|_{L^{1}} = g(s)
\end{displaymath}
with $g\in L^{1}(0,t)$. Then Lebesgue's theorem implies that, as $n\to
\infty$, we get
\begin{displaymath}
  \< u(t), \varphi \> = \< u_{0}, S_{\mu}(t) \varphi\> -
  \int_{0}^{t} \<  u(s),   V S_{\mu}(t-s)\varphi \> .
\end{displaymath}
Hence, for all $T>0$,  $u(\cdot) =  S_{\mu,V}(\cdot) u_{0}\in
L^{\infty}((0,T), L^{\infty}(\R^{N}))$ satisfies, for all $\varphi \in
L^{1}(\R^{N})$,
\begin{displaymath}
   \< u(t), \varphi \> = \<S_{\mu}(t)  u_{0}, \varphi\> -
  \int_{0}^{t} \< S_{\mu}(t-s) Vu(s),    \varphi \> , \quad 0<t<T
\end{displaymath}
and therefore
\begin{displaymath}
  u(t) =  S_{\mu}(t) u_{0} - \int_{0}^{t}  S_{\mu}(t-s) V u(s) \,
       ds
\end{displaymath}
and
$u(\cdot) =  S_{\mu,V}(\cdot) u_{0}$ is the unique fixed
point  of $\mathcal{F}_{\infty}$ in
$[0,T]$ in $L^{\infty}((0,T), L^{\infty}(\R^{N}))$.
\end{proof}

In view of Propositions \ref{prop:VCF_with_uniform_potential} and
\ref{prop:VCF_with_bounded_potential}, given $1\leq p_{0}< \infty$ we
want to discuss the class of $V\in
L^{p_{0}}_{U}(\R^{N})$ that can be approximated in
$L^{p_{0}}_{U}(\R^{N})$ by bounded functions and to analyze the
convergence of the corresponding semigroups.
Then we have the following result. Notice that below we use the class
$\dot L^{p_{0}}_{U}(\R^{N})$ of functions in $L^{p_{0}}_{U}(\R^{N})$
such that the translations are continuous in the
$L^{p_{0}}_{U}(\R^{N})$ norm, that is, $V\in \dot
L^{p_{0}}_{U}(\R^{N})$ iff
\begin{displaymath}
  \|\tau_{y} V - V \|_{L^{p_{0}}_{U}(\R^{N})} \to 0, \quad \mbox{as
    $|y|\to 0$}
\end{displaymath}
where $y\in \R^{N}$ and  $\tau_{y} V (x)= V(x-y)$. This is a closed
proper subspace of $L^{p_{0}}_{U}(\R^{N})$, see \cite{A-C-D-RB}.

\begin{proposition}
  \label{prop:approximation_by_bounded_functions}

  \begin{enumerate}
  \item
    For $1\leq p_{0}< \infty$ a function
    $V\in L^{p_{0}}_{U}(\R^{N})$ can be approximated in
    $L^{p_{0}}_{U}(\R^{N})$ by bounded functions if and only if,
    defining
    \begin{displaymath}
      V_{M}(x) =
      \begin{cases}
        M & \mbox{if $V(x)>M$} \\
        V(x) &\mbox{if $-M\leq V(x)\leq M$},  \\
        -M & \mbox{if $V(x)<-M$}
      \end{cases}  \qquad M >0,
    \end{displaymath}
    we have $V_{M} \xrightarrow[M \to \infty]{} V$ in $L^{p_{0}}_{U}(\R^{N})$.

  \item
    If $V\in \dot L^{p_{0}}_{U}(\R^{N})$ then $V$ can be approximated
    in    $L^{p_{0}}_{U}(\R^{N})$  by bounded functions.

\item
  Assume $V$ is not too large at infinity in the sense that there
  exists $M_{0}$ such that
  \begin{displaymath}
    \lim_{|x| \to \infty} \int_{B(x,1)} |V(y)-V_{M_{0}}(y)|^{p_{0}} \,
    dy = 0 .
  \end{displaymath}
Then $V_{M}  \xrightarrow[M \to \infty]{}  V$ in $L^{p_{0}}_{U}(\R^{N})$.

\item
  The class of functions in $L^{p_{0}}_{U}(\R^{N})$ that can be
  approximated  in
$L^{p_{0}}_{U}(\R^{N})$ by bounded functions is a closed proper  subspace of
$L^{p_{0}}_{U}(\R^{N})$.
  \end{enumerate}
\end{proposition}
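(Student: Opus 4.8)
The statement has four parts, and the plan is to prove them essentially in the order (i), (iii), (ii), (iv), since (iii) is a convenient lemma that will be reused. For (i), the key observation is that truncation is the canonical way to approximate: if $V$ can be approximated by bounded functions $W_n$ at all, then one can compare $|V - V_M|$ with $|V-W_n|$ on each unit ball. On the set where $|V(x)| \le M$ we have $V_M = V$, and on the set where $|V(x)| > M$ one has $|V(x) - V_M(x)| \le |V(x) - V_M(x)|$ which, once $M \ge \|W_n\|_\infty$, is dominated by $2|V(x) - W_n(x)|$ (because on that set $|W_n(x)| \le M$ so $V_M$ lies between $W_n(x)$ and $V(x)$ in the relevant sense, giving $|V - V_M| \le |V - W_n|$ pointwise there). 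Integrating over $B(x_0,1)$, raising to the $1/p_0$ power and taking the supremum over $x_0$ gives $\|V - V_M\|_{L^{p_0}_U} \le C \|V - W_n\|_{L^{p_0}_U}$ for all $M$ large; letting first $M\to\infty$ and then $n\to\infty$ gives the claim. The converse direction is trivial since each $V_M$ is bounded.

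\textbf{Parts (iii) and (ii).} For (iii), the hypothesis splits $\R^N$ into a compact core and an exterior region. On the exterior, $\int_{B(x,1)}|V - V_{M_0}|^{p_0} \to 0$ as $|x|\to\infty$, and since $|V - V_M| \le |V - V_{M_0}|$ pointwise whenever $M \ge M_0$ (truncating at a higher level moves $V_M$ closer to $V$), the tail contribution to $\sup_{x_0}\|V - V_M\|_{L^{p_0}(B(x_0,1))}$ is uniformly small once $|x_0|$ is large. On the compact core — a fixed bounded region — one has $V \in L^{p_0}$ of that region, so $V_M \to V$ there in $L^{p_0}$ by dominated convergence (since $|V_M| \le |V|$ and $V_M \to V$ a.e.), and this controls the supremum over the finitely-many-relevant $x_0$ by a standard covering argument. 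Combining, $\|V - V_M\|_{L^{p_0}_U} \to 0$. Part (ii) then follows immediately: if $V \in \dot L^{p_0}_U(\R^N)$, a mollification argument shows $V$ is a limit in $L^{p_0}_U(\R^N)$ of functions that are locally bounded; alternatively, and more in the spirit of the paper, one shows directly that continuity of translations forces the truncation hypothesis of (iii) to hold — uniform integrability of $|V|^{p_0}$ over unit balls, which is what $\dot L^{p_0}_U$ encodes, prevents concentration and yields $\sup_{x_0}\int_{\{|V|>M\}\cap B(x_0,1)}|V|^{p_0} \to 0$ as $M \to \infty$, which is exactly $V_M \to V$ in $L^{p_0}_U$.

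\textbf{Part (iv).} Denote by $\mathcal{A}$ the set of $V \in L^{p_0}_U(\R^N)$ approximable by bounded functions. That $\mathcal{A}$ is a linear subspace is immediate from the triangle inequality (sum of two bounded approximants is a bounded approximant). Closedness is the main point: if $V^{(k)} \in \mathcal{A}$ and $V^{(k)} \to V$ in $L^{p_0}_U(\R^N)$, then given $\eps>0$ pick $k$ with $\|V - V^{(k)}\|_{L^{p_0}_U} < \eps/2$ and then a bounded $W$ with $\|V^{(k)} - W\|_{L^{p_0}_U} < \eps/2$, so $\|V - W\|_{L^{p_0}_U} < \eps$; hence $V \in \mathcal{A}$. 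For properness, I must exhibit a $V \in L^{p_0}_U(\R^N)$ that is \emph{not} approximable — equivalently, by (i), one for which $V_M \not\to V$. The standard construction is a potential that concentrates mass on shrinking balls marching off to infinity: e.g. place on a ball $B(x_k, r_k)$ with $|x_k| \to \infty$ and $r_k \to 0$ a bump of height $h_k$ chosen so that $h_k^{p_0} r_k^N \sim 1$ (keeping the $L^{p_0}_U$ norm bounded) but $h_k \to \infty$; then for any fixed $M$, $\int_{B(x_k,1)}|V - V_M|^{p_0} \gtrsim (h_k - M)^{p_0} r_k^N \to 1$ as $k\to\infty$, so $\|V - V_M\|_{L^{p_0}_U} \not\to 0$. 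The main obstacle in the whole proof is really bookkeeping in part (iii) — handling the interface between the compact core and the tail, and being careful that the covering-argument constants are genuinely independent of $x_0$ — together with checking the pointwise monotonicity $|V - V_M| \le |V - V_{M_0}|$ for $M \ge M_0$, which is elementary case analysis on the sign and size of $V(x)$ but needs to be stated cleanly.
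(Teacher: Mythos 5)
Your proof is correct and follows essentially the same route as the paper's on all four parts: the same pointwise comparison $|V-V_{M}|\le |V-W_{n}|$ on $\{|V|>M\}$ for $M\ge \|W_{n}\|_{L^{\infty}}$ in (i), the same monotonicity-in-$M$ plus Lebesgue core/tail argument in (iii), convolution with an approximate identity in (ii) (the paper uses the heat semigroup $S(t)V$ where you use a compactly supported mollifier), and the same concentrating-bumps counterexample with $h_{k}^{p_{0}}r_{k}^{N}\sim 1$, $h_{k}\to\infty$, $|x_{k}|\to\infty$ in (iv). The only nit is that in (ii) you should say the mollification $V\ast\rho_{\eps}$ is \emph{globally} (not merely locally) bounded, which is exactly what the uniform norm $\|V\ast\rho_{\eps}\|_{L^{\infty}}\le C_{\eps}\|V\|_{L^{p_{0}}_{U}(\R^{N})}$ delivers and what the argument requires.
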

\begin{proof}
(i) Assume $V$ can be approximated in
    $L^{p_{0}}_{U}(\R^{N})$ by bounded functions. Then there
exists $W_{n} \in L^{\infty}(\R^{N})$ such that $W_{n} \xrightarrow[n \to \infty]{}  V$ in
$L^{p_{0}}_{U}(\R^{N})$. Letting  $M_{n}= \|W_{n}\|_{L^{\infty}}$, we
have, for every $x\in \R^{N}$,
\begin{displaymath}
   \int_{B(x,1)} |V(y)-V_{M_{n}}(y)|^{p_{0}} \,    dy =  \int_{B(x,1)
     \cap \{V\geq M_{n}\}} |V(y)-M_{n}|^{p_{0}} \,    dy +  \int_{B(x,1)
     \cap \{V\leq -M_{n}\}} |V(y)+M_{n}|^{p_{0}} \,    dy
\end{displaymath}
and thus
\begin{displaymath}
   \int_{B(x,1)} |V(y)-V_{M_{n}}(y)|^{p_{0}} \,    dy \leq
   \int_{B(x,1)} |V(y)-W_{n}(y)|^{p_{0}} \,    d y .
\end{displaymath}

Therefore  $V_{M_{n}}  \xrightarrow[n \to \infty]{}  V$ in $L^{p_{0}}_{U}(\R^{N})$. Since clearly
$\|V- V_{M}\|_{L^{p_{0}}_{U}(\R^{N})}$ decreases with $M$, then $V_{M}
 \xrightarrow[M \to \infty]{}  V$ in $L^{p_{0}}_{U}(\R^{N})$.

The converse is immediate.

\noindent (ii)
If $V\in \dot L^{p_{0}}_{U}(\R^{N})$ from the results in
\cite{A-C-D-RB} we know that the solution of the heat equation
satisfies  $S(t)V
 \xrightarrow[t \to 0^{+}]{}  V$ in $ L^{p_{0}}_{U}(\R^{N})$  and $S(t)V\in
L^{\infty}(\R^{N})$.

\noindent (iii)
Define the family of bounded continuous   functions in   $\R^{N}$,  $H_{M}(x) = \int_{B(x,1)}
|V(y)-V_{M}(y)|^{p_{0}} \,  dy$  which are decreasing in $M$ and
converge to $0$ as $M\to \infty$ uniformly in bounded sets, by
Lebesgue's theorem.

Then the assumption reads $\lim_{|x|\to
  \infty} H_{M_{0}}(x) =0$. Therefore, given $\eps>0$ there exists
$R>0$ such that for $|x|>R$ and $M> M_{0}$ we have
\begin{displaymath}
  0\leq H_{M}(x) \leq H_{M_{0}}(x) \leq \eps .
\end{displaymath}
This and the uniform  convergence $H_{M}(x)  \xrightarrow[M \to \infty ]{}   0$ for $|x|\leq R$ implies
$H_{M} \xrightarrow[M \to \infty ]{}  0$ uniformly in $\R^{N}$ and
hence  $V_{M} \xrightarrow[M \to \infty ]{}  V$ in
$L^{p_{0}}_{U}(\R^{N})$.

\noindent (iv)
The closedness  is immediate as this class is the closure of
$L^{\infty}(\R^{N})$ in $L^{p_{0}}_{U}(\R^{N})$.

To show the subspace is proper, consider a sequence $|x_{n}|\to
\infty$ with $|x_{n}-x_{m}|> 2$ for all $n,m \in \N$. Then define
$V(x)= \sum_{n} n^{\frac{N}{p_{0}}} \Car_{B(x_{n}, \frac{1}{n})}
(x)$. Clearly $\|V\|_{L^{p_{0}}(B(x,1))} \leq c$ for all $x\in \R^{N}$
since $B(x,1)$ contains at most one point of $\{x_{n}\}_{n}$, so
$V\in L^{p_{0}}_{U}(\R^{N})$. Also, for any $M>0$ if $n>M$ we have
\begin{displaymath}
  \|V-V_{M}\|_{L^{p_{0}}(B(x_{n},1))}^{p_{0}} \geq c\frac{(n^{\frac{N}{p_{0}}} -M)^{p_{0}}}{n^{N}} \xrightarrow[n \to \infty ]{} c>0
\end{displaymath}
and therefore $\|V-V_{M}\|_{L^{p_{0}}_{U} (\R^{N})}^{p_{0}} \geq c>0$ for
all $M>0$.
\end{proof}

\begin{remark}

  \begin{enumerate}
  \item
    Assume $V \in L^{p_{0}}_U(\R^N)$ with $p_{0}>1$ and can be
    approximated in $L^{p_{0}}_{U}(\R^{N})$ by bounded functions. Then
    $V$ can be approximated in $L^{1}_{U}(\R^{N})$ by bounded
    functions. To see this just observe that uniform spaces are
    nested, that is, $L^{p}_{U}(\R^{N}) \subset  L^{q}_{U}(\R^{N})$, if $p> q$.

\item
    Conversely, if $V \in L^{p_{0}}_U(\R^N)$ with $p_{0}>1$ and can be
    approximated in $L^{1}_{U}(\R^{N})$ by bounded functions, then for
    every $1< q <p_{0}$, $V$ can be approximated in
    $L^{q}_{U}(\R^{N})$ by bounded functions. To see this, just notice
    that like in bounded domains, we have the interpolation inequality
    \begin{displaymath}
      \|f\|_{L^{q}_{U}} \leq \|f\|_{L^{p_{0}}_{U}}^{1-\theta}
      \|f\|_{L^{1}_{U}}^{\theta}, \quad \frac{1}{q} =
      \frac{1-\theta}{p_{0}}+ \frac{\theta}{1} , \quad 0<\theta <1,
    \end{displaymath}
    for any $f\in L^{p_{0}}_U(\R^N)$.
  \end{enumerate}
\end{remark}

Now we analize the approximation of the corresponding semigroups. For
this assume  now $0\leq V \in L^{p_{0}}_U(\R^N)$ with $p_{0}>\max\{\frac
N{2\mu}, 1\}$ and consider the increasing sequence $0\leq
V_{M}\leq V$. Then, by Proposition \ref{prop:VCF_with_uniform_potential}, we
have, for $1\leq p \leq \infty$ and   $0\leq u_{0}\in L^{p}(\R^{N})$,
\begin{displaymath}
  0\leq S_{\mu,V}(t)u_{0} \leq S_{\mu,V_{M}}(t)u_{0}
\end{displaymath}
and for  $M_{1} \leq M_{2}$
\begin{displaymath}
0\leq S_{\mu, V_{M_{2}}}(t)u_{0} \leq S_{\mu,V_{M_{1}}}(t)u_{0}  .
\end{displaymath}

Hence, the monotonic decreasing  limit
\begin{displaymath}
  \tilde{S}_{\mu,V}(t)u_{0} \mydef \lim_{M\to \infty} S_{\mu,V_{M}}(t)u_{0}
\end{displaymath}
exists pointwise  and in $L^{p}(\R^{N})$ for all $t>0$  and satisfies $ 0\leq S_{\mu,V}(t)u_{0} \leq
\tilde{S}_{\mu,V}(t)u_{0}$,  $\tilde{S}_{\mu,V}(t+s)u_{0}=
\tilde{S}_{\mu,V}(t) \tilde{S}_{\mu,V}(s)u_{0}$ and $\lim_{t\to 0^{+}}
\tilde{S}_{\mu,V}(t)u_{0} =u_{0}$  in $L^{p}(\R^{N})$. Using the
positive and negative part of $u_{0}\in L^{p}(\R^{N})$ we can extend $
\tilde{S}_{\mu,V}(t)$ to an order preserving $C^{0}$ semigroup of
contractions in $L^{p}(\R^{N})$.

Now we prove that $\tilde{S}_{\mu,V}(t)= S_{\mu,V}(t)$ for
$t\geq 0$   if $V$ can be approximated in    $L^{p_{0}}_{U}(\R^{N})$
by bounded functions.

\begin{theorem}
  \label{thm:convergence_of_semigroups}

  Assume $0\leq V \in L^{p_{0}}_U(\R^N)$ with $p_{0}>\max\{\frac
N{2\mu}, 1\}$ can be approximated in    $L^{p_{0}}_{U}(\R^{N})$ by
bounded functions.

Then  $\tilde{S}_{\mu,V}(t)= S_{\mu,V}(t)$ for $t\geq 0$, as operators
in $L^{p}(\R^{N})$ for $1\leq p\leq \infty$.

\end{theorem}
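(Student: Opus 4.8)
The plan is to upgrade the approximation of $V$ to convergence of the associated semigroups. By Proposition \ref{prop:approximation_by_bounded_functions}(i) the hypothesis is exactly that $V_{M}\to V$ in $L^{p_{0}}_{U}(\R^{N})$, and from the monotone construction carried out just before the statement we already know that for every $u_{0}\in L^{p}(\R^{N})$, every $1\le p\le\infty$ and every $t\ge 0$ one has $S_{\mu,V_{M}}(t)u_{0}\to\tilde S_{\mu,V}(t)u_{0}$ in $L^{p}(\R^{N})$ (monotonically, hence also pointwise a.e.\ for $u_{0}\ge0$). Thus it suffices to prove that $S_{\mu,V_{M}}(t)\to S_{\mu,V}(t)$ strongly on $L^{2}(\R^{N})$: uniqueness of limits then forces $\tilde S_{\mu,V}(t)=S_{\mu,V}(t)$ on $L^{2}(\R^{N})$, and the general case follows by consistency.

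First I would establish the $L^{2}$ statement via convergence of the quadratic forms. Recall from Proposition \ref{prop:nonegative_potential} that, because $p_{0}>\frac{N}{2\mu}$, the form $a$ is well defined and closed with domain $H^{\mu}(\R^{N})$; the same is trivially true of $a_{M}(\phi,\psi)=\int_{\R^{N}}(-\Delta)^{\mu/2}\phi\,(-\Delta)^{\mu/2}\psi+\int_{\R^{N}}V_{M}\phi\psi$, since $V_{M}$ is bounded. Applying the H\"older-on-cubes estimate from the proof of Proposition \ref{prop:nonegative_potential} to the nonnegative potential $V-V_{M}$ gives
\begin{displaymath}
\bigl|a(\phi,\psi)-a_{M}(\phi,\psi)\bigr|=\Bigl|\int_{\R^{N}}(V-V_{M})\phi\psi\Bigr|\le C\,\|V-V_{M}\|_{L^{p_{0}}_{U}(\R^{N})}\,\|\phi\|_{H^{\mu}(\R^{N})}\|\psi\|_{H^{\mu}(\R^{N})},
\end{displaymath}
so $a_{M}\to a$ uniformly on bounded subsets of $H^{\mu}(\R^{N})$, all forms being bounded below by $\|(-\Delta)^{\mu/2}\cdot\|_{L^{2}}^{2}$. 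A standard form computation then yields norm resolvent convergence: for $\lambda\ge1$, writing $A,A_{M}$ for the generators of $S_{\mu,V},S_{\mu,V_{M}}$ on $L^{2}(\R^{N})$, one has $\langle[(\lambda+A_{M})^{-1}-(\lambda+A)^{-1}]f,g\rangle=(a-a_{M})\bigl((\lambda+A)^{-1}f,(\lambda+A_{M})^{-1}g\bigr)$, and since $\|(\lambda+A)^{-1}\|_{\mathcal L(L^{2},H^{\mu})}\le C$ and likewise $\|(\lambda+A_{M})^{-1}\|_{\mathcal L(L^{2},H^{\mu})}\le C$ uniformly in $M$ (both bounds following only from $a_{M}\ge\|(-\Delta)^{\mu/2}\cdot\|^{2}$ and $A_{M}\ge0$), one gets $\|(\lambda+A_{M})^{-1}-(\lambda+A)^{-1}\|_{\mathcal L(L^{2}(\R^{N}))}\le C\|V-V_{M}\|_{L^{p_{0}}_{U}(\R^{N})}\to0$. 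Hence $S_{\mu,V_{M}}(t)\to S_{\mu,V}(t)$ strongly on $L^{2}(\R^{N})$, and with the first paragraph $\tilde S_{\mu,V}(t)=S_{\mu,V}(t)$ on $L^{2}(\R^{N})$.

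It remains to propagate to all $p$, using only consistency. Both $\{S_{\mu,V}(t)\}$ and $\{\tilde S_{\mu,V}(t)\}$ are consistent families of order preserving contractions on $L^{p}(\R^{N})$, $1\le p\le\infty$ (Proposition \ref{prop:nonegative_potential}, resp.\ the construction before the statement), now known to agree on $L^{2}(\R^{N})$. For $1\le p<\infty$ and $u_{0}\in L^{p}(\R^{N})$, approximate $u_{0}$ in $L^{p}(\R^{N})$ by functions that are bounded with bounded support, hence lie in $L^{p}(\R^{N})\cap L^{2}(\R^{N})$; consistency and the $L^{2}$ identity give equality of $S_{\mu,V}(t)$ and $\tilde S_{\mu,V}(t)$ on these, and letting the approximants tend to $u_{0}$ in $L^{p}(\R^{N})$ gives it for $u_{0}$. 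For $p=\infty$ density fails, so I would use duality: the adjointness $\langle S_{\mu,V}(t)\phi,\psi\rangle=\langle\phi,S_{\mu,V}(t)\psi\rangle$ from Proposition \ref{prop:nonegative_potential} is inherited by $\tilde S_{\mu,V}(t)$ through the limit in $M$ (dominated convergence, using that each $S_{\mu,V_{M}}(t)$ is self-adjoint on $L^{2}$ by that same proposition); hence for $u_{0}\in L^{\infty}(\R^{N})$ and $\varphi\in L^{1}(\R^{N})$, $\langle S_{\mu,V}(t)u_{0},\varphi\rangle=\langle u_{0},S_{\mu,V}(t)\varphi\rangle=\langle u_{0},\tilde S_{\mu,V}(t)\varphi\rangle=\langle\tilde S_{\mu,V}(t)u_{0},\varphi\rangle$ (using the $L^{1}$ identity already proved), so $S_{\mu,V}(t)u_{0}=\tilde S_{\mu,V}(t)u_{0}$ in $L^{\infty}(\R^{N})$.

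The step I expect to be the main obstacle is the passage from the elementary form estimate to norm resolvent convergence on $L^{2}$: everything there hinges on $a$ having form domain \emph{exactly} $H^{\mu}(\R^{N})$, which is precisely where $p_{0}>\frac{N}{2\mu}$ is used (it makes $V|\cdot|^{2}$ a form perturbation of relative bound zero), and this is supplied by Proposition \ref{prop:nonegative_potential}; one must also be careful that the resolvent and form bounds for $-(-\Delta)^{\mu}-V_{M}$ are genuinely uniform in $M$, which is where $0\le V_{M}\le V$ enters. An alternative route, bypassing $L^{2}$, is to run the same second‑resolvent‑identity estimate directly on $L^{p}(\R^{N})$ for $1<p\le p_{0}$, using (\ref{eq:potential_relatively_bounded}) with $V$ replaced by $V-V_{M}$ together with the sectoriality of $-(-\Delta)^{\mu}-V_{M}$ from Proposition \ref{prop:VCF_with_uniform_potential}(i); in that case one must verify the uniformity in $M$ of the sectoriality constants and then propagate to the remaining exponents exactly as above.
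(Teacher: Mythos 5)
Your proof is correct, but it takes a genuinely different route from the paper's. The paper fixes $0\leq u_0\in C^{\infty}_c(\R^N)$, works in $L^{p}(\R^{N})$ for some $1<p\leq p_0$, represents both $u_M(t)=S_{\mu,V_M}(t)u_0$ and $u(t)=S_{\mu,V}(t)u_0$ by the variation of constants formula (Propositions \ref{prop:VCF_with_uniform_potential} and \ref{prop:VCF_with_bounded_potential}), and estimates $z_M=u-u_M$ in $H^{2\mu r}_p(\R^{N})$ using the smoothing estimate (\ref{eq:fractional_smoothing_Sobolev}), the relative bound of the multiplication by $V-V_M$ in terms of $\|V-V_M\|_{L^{p_0}_U}$, and Henry's singular Gronwall lemma, obtaining $\|z_M(t)\|_{H^{2\mu r}_p}\leq C(T)\|V-V_M\|_{L^{p_0}_U}$ before concluding by density and consistency. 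You instead exploit the self-adjoint form structure on $L^2(\R^N)$: the same H\"older-on-cubes estimate, applied to $V-V_M$ at the \emph{form} level, gives $|a(\phi,\psi)-a_M(\phi,\psi)|\leq C\|V-V_M\|_{L^{p_0}_U}\|\phi\|_{H^{\mu}}\|\psi\|_{H^{\mu}}$, whence norm resolvent convergence with an explicit rate (your uniform-in-$M$ bounds are legitimate, as they use only $V_M\geq 0$), then strong convergence of the semigroups on $L^2$, and finally propagation to all $1\leq p\leq\infty$ by consistency, density and duality — all steps that are sound given what Proposition \ref{prop:nonegative_potential} and the monotone construction preceding the theorem provide. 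Both arguments rest on the identical quantitative input, namely relative boundedness of the perturbation $V-V_M$ measured by $\|V-V_M\|_{L^{p_0}_U}$; your version uses it only through the quadratic form, avoids the Duhamel and sectoriality machinery, and yields operator-norm convergence of resolvents on $L^2$, while the paper's version yields convergence in the stronger $H^{2\mu r}_p$ norms directly in $L^p$ for a range of $p$. Two small points to tidy: with the paper's convention $A_p=-(-\Delta)^{\mu}-V$ your resolvents should be written $(\lambda-A)^{-1}$, $(\lambda-A_M)^{-1}$ (or work throughout with the nonnegative form operators $(-\Delta)^{\mu}+V$ and $(-\Delta)^{\mu}+V_M$), and the passage from norm resolvent convergence to convergence of the semigroups should carry a citation (Trotter--Kato, or the functional calculus for nonnegative self-adjoint operators).
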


\begin{proof}
  Take $0\leq u_{0} \in C^{\infty}_{c}(\R^{N})$ and
  $u_{M}(t) = S_{\mu, V_{M}}(t)u_{0}$ and
  $u(t) = S_{\mu, V }(t)u_{0}$ for $t\geq 0$. Since these semigroups are consistent
  in the Lebesgue spaces, it is enough to show that
  $u(t) = \lim_{M\to \infty} u_{M}(t)$ for $t>0$ in some of these spaces. Take
  $1< p\leq p_{0}$. As $u_{0}$ belongs to the domain of the generator,
  $H^{2\mu}_{p}(\R^{N})$,
  we have, by Propositions \ref{prop:VCF_with_uniform_potential} and
  \ref{prop:VCF_with_bounded_potential},
  \begin{displaymath}
    u_{M}(t) = S_{\mu}(t) u_{0} - \int_{0}^{t}  S_{\mu}(t-s) V_{M} u_{M}(s) \,
    ds, \quad    u(t) = S_{\mu}(t) u_{0} - \int_{0}^{t}  S_{\mu}(t-s) V u(s) \,
    ds, \quad  t>0.
  \end{displaymath}
Also  by general properties of semigroups in \cite{1983_Pazy}, we have
that $u_{M}(t), u(t) \in H^{2\mu}_{p}(\R^{N})$ for $t>0$.

  Now notice that (\ref{eq: potential_relatively_bounded_H2mus}), (\ref{eq:potential_relatively_bounded}) and the
  closed mapping theorem also imply that the graph norm of $L = (-\Delta)^{\mu} +V$ in
  $H^{2\mu}_{p}(\R^{N})$ is equivalent to the $H^{2\mu}_{p}(\R^{N})$
  norm, that is,
  \begin{displaymath}
    c_{0}\|\phi\|_{H^{2\mu}_{p}} \leq \|L\phi\|_{L^{p}} +
    \|\phi\|_{L^{p}}\leq c_{1}\|\phi\|_{H^{2\mu}_{p}} .
  \end{displaymath}

  This implies, using that $L u(t) = L  S_{\mu, V }(t)u_{0} =  S_{\mu,
    V }(t) Lu_{0}$ and the semigroup is of contractions, that
  \begin{displaymath}
    \|u(t)\|_{H^{2\mu}_{p}} \leq c \|u_{0}\|_{H^{2\mu}_{p}} , \quad
    t>0 .
  \end{displaymath}

  Now using the variations of constants formula  and adding and
  subtracting the term  $\int_{0}^{t}  S_{\mu}(t-s)
  V_{M}u(s)\, ds$, we get that $z_{M}(t)= u(t) -u_{M}(t)$
  satisfies
  \begin{displaymath}
    z_{M}(t)  =  -\int_{0}^{t}  S_{\mu}(t-s) \big(V-V_{M}\big) u(s) \,
    ds -  \int_{0}^{t}  S_{\mu}(t-s) V_{M} z_{M}(s) \,
    ds, \quad  t>0.
  \end{displaymath}

  Hence for $0<r<1$, using (\ref{eq:fractional_smoothing_Sobolev}),
  \begin{displaymath}
    \|z_{M}(t)\|_{H^{2\mu r}_{p}} \leq \int_{0}^{t}
    \frac{c}{(t-s)^{r}} \|(V-V_{M})u(s)\|_{L^{p}} \, ds + \int_{0}^{t}
    \frac{c}{(t-s)^{r}} \|V_{M} z_{M}(s)\|_{L^{p}} \, ds
  \end{displaymath}
and (\ref{eq: potential_relatively_bounded_H2mus}) yields
\begin{displaymath}
  \|z_{M}(t)\|_{H^{2\mu r}_{p}} \leq \int_{0}^{t}
  \frac{c}{(t-s)^{r}} \|V-V_{M}\|_{L^{p_{0}}_{U}}
  \|u(s)\|_{H^{2\mu}_{p}} \, ds + \int_{0}^{t}
  \frac{c}{(t-s)^{r}} \|V_{M}\|_{L^{p_{0}}_{U}}
  \|z_{M}(s)\|_{H^{2\mu r}_{p}} \, ds
\end{displaymath}
and then
\begin{displaymath}
  \|z_{M}(t)\|_{H^{2\mu r}_{p}} \leq C t^{1-r} \|V-V_{M}\|_{L^{p_{0}}_{U}}
  \|u_{0}\|_{H^{2\mu}_{p}} + C \int_{0}^{t}
  \frac{1}{(t-s)^{r}}
  \|z_{M}(s)\|_{H^{2\mu r}_{p}} \, ds .
\end{displaymath}

Then Henry's singular Gronwall Lemma 7.1.1 in \cite{1981_Henry}  implies that for any $T>0$
\begin{displaymath}
  \|z_{M}(t)\|_{H^{2\mu r}_{p}} \leq C(T)
  \|V-V_{M}\|_{L^{p_{0}}_{U}}  \xrightarrow[M \to \infty ]{} 0 , \quad
  0<t<T.
\end{displaymath}

 Therefore  $\tilde{S}_{\mu,V}(t)u_{0}= S_{\mu,V}(t)u_{0}$ for $t\geq
 0$ and $0\leq u_{0} \in C^{\infty}_{c}(\R^{N})$. By linearity and
 density we have the result.
\end{proof}

\section{Exponential decay}
\label{sec:exponential-decay}

In this section we want to characterise certain classes of non
negative potentials $0\leq V\in L^{p_{0}}_{U}(\R^{N})$ with  $p_{0}
>\frac{N}{2\mu}$ such that the
fractional Schrödinger semigroup $\{S_{\mu, V}(t)\}_{t\geq0}$ decays
exponentially in $L^{p}(\R^{N})$. Since the semigroup is of
contractions in $L^{p}(\R^{N})$ for $1\leq p\leq \infty$ then either
$\|S_{\mu, V}(t) \|_{\mathcal{L}(L^{p}(\R^{N}))} =1$ for
all $t>0$ or there exists $M \geq 1$, $a>0$ such that
\begin{equation} \label{eq:exponential_decay}
  \|S_{\mu, V} (t)\|_{\mathcal{L}(L^p(\R^N))} \leq  Me^{-at}, \quad
  t\geq0 .
\end{equation}
Actually if for $t_{0}>0$, $\alpha \mydef \|S_{\mu, V}(t_{0})
\|_{\mathcal{L}(L^{p}(\R^{N}))} <1$, then writing $t=kt_{0} + s$, with
$k\in \N$ and $s\in (0,t_{0}]$ and the semigroup property gives
 $\|S_{\mu, V}(t) \|_{\mathcal{L}(L^{p}(\R^{N}))} \leq
  \frac{1}{\alpha} e^{-a t}$ where
  $a=\frac{|\ln{(\alpha)^{-1}|}}{t_0}>0$,
  i.e. (\ref{eq:exponential_decay}). Notice that one obtains $M\geq 1$ even if the semigroup is
of contractions.

Then we define the exponential type of the semigroup
$\{S_{\mu, V} (t)\}_{t\geq 0} $ in $L^p(\R^N)$, $1\leq p\leq \infty$,  as
\begin{displaymath}
 \omega_{p} \mydef   \sup\{a \in \R, \ \|S_{\mu, V} (t)\|_{\mathcal{L}(L^p(\R^N))} \leq
 M e^{-at}, \quad t\geq 0,  \ \mbox{for some $M\geq 1$} \}  ,
\end{displaymath}
and $\omega_{p}\geq 0$, since the semigroup is of contractions. The exponential type is
related to the spectral bound  of the generator $A_{p} = -(-\Delta)^\mu - V$, since these semigroups are
order preserving, then for $1\leq p<\infty$,
\begin{displaymath}
 - \omega_{p} = s(A_{p}) \mydef  \sup\{Re(\lambda),  \
  \lambda \in \sigma(A_{p})\},
\end{displaymath}
see \cite{weis95:_stability_Lp,weis98:_stability_Lp}.

Then we have the following  result. Notice that below
we denote by $\|\cdot\|_{p\to q} = \|\cdot\|_{\mathcal{L}(L^{p},
  L^{q})}$.

\begin{theorem}
  \label{thm:exponential_type_independent_Lp}

  For $0\leq V \in L^{p_{0}}_U(\R^N)$ with $p_{0}>\max\{\frac N{2\mu}, 1\}$, the
semigroup $u(t)= S_{\mu,V}(t)u_{0}$ satisfies the following.

  \begin{enumerate}
  \item
    For $1\leq p<\infty$ and $t\geq 0$,
    \begin{displaymath}
      \| S_{\mu,V}(t)\|_{2 \to 2} \leq   \| S_{\mu,V}(t)\|_{p \to p} =\| S_{\mu,V}(t)\|_{p' \to p'}
      \leq   \| S_{\mu,V}(t)\|_{\infty \to \infty} = \| S_{\mu,V}(t)\|_{1 \to 1}
    \end{displaymath}

  \item
    For $0<\mu <1$ and  $1\leq p\leq \infty$, the exponential type satisfies
    \begin{displaymath}
\omega_{2} \geq \omega_{p} \geq \omega_{\infty} \geq \frac{\omega_{2}}{1+\frac{N}{4\mu}}
\geq 0. 
\end{displaymath}

\item 
For $\mu=1$, the  exponential type $\omega_{p}$ is independent of
      $1\leq p \leq \infty$.
    \end{enumerate}

      In particular, in  cases (ii) and (iii) above, either $\omega_{p}=0$ for all $1\leq p\leq \infty$ or
they are all positive  simultaneously. In such a case there exists a common decay rate
independent of $p$.

\end{theorem}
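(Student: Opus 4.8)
\textbf{Proof strategy for Theorem \ref{thm:exponential_type_independent_Lp}.}
The plan is to establish the three items in order, since (ii) and (iii) rest on (i), and the final ``in particular'' statement is an immediate consequence of the chain of inequalities in (ii) (and the $p$-independence in (iii)). For part (i), I would exploit two structural features already available: the semigroup is order preserving and of contractions on every $L^{p}(\R^{N})$, and, by Proposition \ref{prop:nonegative_potential}, the semigroup on $L^{p'}(\R^{N})$ is the adjoint of the one on $L^{p}(\R^{N})$, which gives the equalities $\|S_{\mu,V}(t)\|_{p\to p}=\|S_{\mu,V}(t)\|_{p'\to p'}$ and in particular $\|S_{\mu,V}(t)\|_{1\to 1}=\|S_{\mu,V}(t)\|_{\infty\to\infty}$. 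The remaining inequalities $\|S_{\mu,V}(t)\|_{2\to 2}\le\|S_{\mu,V}(t)\|_{p\to p}\le\|S_{\mu,V}(t)\|_{\infty\to\infty}$ follow from Riesz--Thorin interpolation: for $2\le p<\infty$ write $L^{p}$ as an interpolation space between $L^{2}$ and $L^{\infty}$ and use that $\|S_{\mu,V}(t)\|_{2\to2}\le 1$ and $\|S_{\mu,V}(t)\|_{\infty\to\infty}\le 1$ to get $\|S_{\mu,V}(t)\|_{p\to p}\le\|S_{\mu,V}(t)\|_{2\to2}^{2/p}\|S_{\mu,V}(t)\|_{\infty\to\infty}^{1-2/p}$, hence $\|S_{\mu,V}(t)\|_{2\to2}\le\|S_{\mu,V}(t)\|_{p\to p}$ (using the bound $\le1$ on the $\infty$ factor) and $\|S_{\mu,V}(t)\|_{p\to p}\le\|S_{\mu,V}(t)\|_{\infty\to\infty}$ (using the bound $\le1$ on the $L^2$ factor); for $1<p<2$ reduce to the previous case via duality. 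Translating these operator-norm inequalities into exponential types, $\omega_{q}\ge\omega_{p}$ whenever $\|S_{\mu,V}(t)\|_{q\to q}\le\|S_{\mu,V}(t)\|_{p\to p}$, gives $\omega_{2}\ge\omega_{p}\ge\omega_{\infty}$ for all $p$.

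For part (ii), the new ingredient must be an ultracontractivity-type smoothing estimate to bound $\omega_{\infty}$ from below in terms of $\omega_{2}$. The idea: by Proposition \ref{prop:VCF_with_uniform_potential}(ii), $|S_{\mu,V}(t)u_{0}|\le S_{\mu}(t)|u_{0}|$, so $S_{\mu,V}(t)$ inherits the $L^{p}$--$L^{q}$ smoothing (\ref{eq:fractional_smoothing_LpLq}); in particular $\|S_{\mu,V}(t)\|_{2\to\infty}\le c\,t^{-N/(4\mu)}$ and $\|S_{\mu,V}(t)\|_{1\to2}\le c\,t^{-N/(4\mu)}$. Given $\eps>0$, pick $a=\omega_{2}-\eps$ so that $\|S_{\mu,V}(t)\|_{2\to2}\le Me^{-at}$; then for $t$ large, using the semigroup property and splitting $t=1+(t-2)+1$ (smoothing on the first and last unit intervals, decay in $L^{2}$ on the middle),
\begin{displaymath}
  \|S_{\mu,V}(t)\|_{1\to\infty}\le\|S_{\mu,V}(1)\|_{2\to\infty}\,\|S_{\mu,V}(t-2)\|_{2\to2}\,\|S_{\mu,V}(1)\|_{1\to2}\le C e^{-a(t-2)}=C' e^{-at}.
\end{displaymath}
This gives $\omega_{1\to\infty}\ge\omega_{2}$, but what is needed is a lower bound on $\omega_{\infty}=\omega_{\infty\to\infty}$. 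Here one interpolates the \emph{other} way: $\|S_{\mu,V}(t)\|_{\infty\to\infty}\le\|S_{\mu,V}(t/2)\|_{1\to\infty}^{?}$ is not directly available, so instead I would use $\|S_{\mu,V}(t)\|_{\infty\to\infty}=\|S_{\mu,V}(t)\|_{1\to1}$ together with the factorization through $L^{2}$: $\|S_{\mu,V}(t)\|_{1\to1}\le\|S_{\mu,V}(t/2)\|_{2\to1}\|S_{\mu,V}(t/2)\|_{1\to2}\le c\,(t/2)^{-N/(4\mu)}\|S_{\mu,V}(t/2)\|_{1\to2}$, and similarly dualize; more efficiently, combine the contraction bound (exponent $0$ on the norm) with the decaying bound via the interpolation inequality $\|S_{\mu,V}(t)\|_{\infty\to\infty}\le\|S_{\mu,V}(t)\|_{?}^{\theta}\cdot 1^{1-\theta}$. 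The clean route is: from $\|S_{\mu,V}(t)\|_{1\to1}\le 1$ and $\|S_{\mu,V}(t)\|_{1\to1}\le C e^{-at}\cdot t^{N/(4\mu)}$ (obtained by writing $L^{1}\to L^{1}$ as $L^{1}\to L^{2}\to L^{1}$ and inserting the decay on the middle factor — the power of $t$ is the price of the two smoothing steps), one cannot beat $a$; the genuine loss of a factor $1+\tfrac{N}{4\mu}$ comes from a different balancing, namely bounding $\|S_{\mu,V}(t)\|_{\infty\to\infty}$ by optimizing over how one splits $t$ into a decaying $L^2$ part of length $\lambda t$ and two smoothing parts of length $(1-\lambda)t/2$, where the smoothing factor $t^{-N/(4\mu)}$ must be absorbed: choosing $\lambda$ so that $e^{-a\lambda t}$ and the polynomial correction balance yields an effective rate $a\lambda$ with $\lambda\to(1+\tfrac{N}{4\mu})^{-1}$, hence $\omega_{\infty}\ge\omega_{2}/(1+\tfrac{N}{4\mu})$. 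This optimization is the technical heart of the proof; I would write it as: for any $a<\omega_2$ and any $t$, $\|S_{\mu,V}(t)\|_{\infty\to\infty}\le C\,\bigl(\tfrac{t}{?}\bigr)^{N/(4\mu)}e^{-a\theta t}$ for the appropriate $\theta<1$, then let $a\uparrow\omega_2$ and absorb the polynomial factor into the exponential for large $t$, concluding $\omega_\infty\ge\theta\,\omega_2$ and then optimizing $\theta$.

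For part (iii), when $\mu=1$ the additional structure is Gaussian upper and lower bounds on the heat kernel, but here the cleaner statement is that for $\mu=1$ one invokes the known result on $p$-independence of the exponential type (spectral bound) for positivity-preserving semigroups dominated by the Gaussian semigroup; concretely, since $0\le S_{1,V}(t)u_{0}\le S(t)u_{0}$ and $S(t)$ has a Gaussian kernel with both upper and lower bounds on compact space-time sets, the argument of \cite{Arendt-Batty} (and \cite{weis95:_stability_Lp,weis98:_stability_Lp} for the spectral-bound identity) applies to show $\omega_{p}$ is the same for all $1\le p\le\infty$. I would simply cite this, noting it also follows by combining (i) with the reverse smoothing peculiar to $\mu=1$; the key point is that for $\mu=1$ the loss factor in (ii) is not present. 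Finally, the ``in particular'' claim: by (ii) or (iii), $\omega_{\infty}=0\iff\omega_{2}=0$ (since $0\le\omega_{\infty}$ and $\omega_{\infty}\ge\omega_{2}/(1+\tfrac N{4\mu})$, while $\omega_{2}\ge\omega_{p}\ge\omega_{\infty}$), and then $\omega_{p}=0$ for all $p$ by the sandwich; otherwise all $\omega_{p}>0$, and any common rate $a<\omega_{\infty}\le\omega_{p}$ works simultaneously for every $p$ by (i). The main obstacle I anticipate is getting the constant $1+\tfrac{N}{4\mu}$ exactly right in the optimization of part (ii) rather than merely a qualitative $\omega_\infty>0$; this requires tracking the power $t^{-N/(4\mu)}$ from (\ref{eq:fractional_smoothing_LpLq}) with $q=\infty$, $p=2$ through the three-interval splitting and balancing the exponential against the polynomial correction sharply.
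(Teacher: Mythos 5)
Your overall architecture (duality plus interpolation for (i), a quantitative comparison of $\omega_\infty$ with $\omega_2$ for (ii), absence of loss for $\mu=1$ in (iii)) matches the paper's, but the step that carries all the weight in part (ii) is missing, and the route you sketch for it cannot work. You propose to bound $\|S_{\mu,V}(t)\|_{\infty\to\infty}$ (equivalently $\|S_{\mu,V}(t)\|_{1\to1}$) by splitting the \emph{time} interval into two smoothing pieces and a middle piece decaying in $L^2$. Any such factorization requires an intermediate bound of the form $\|S_{\mu,V}(s)\|_{\infty\to 2}$ or $\|S_{\mu,V}(s)\|_{2\to 1}$, and these are infinite: the smoothing estimates (\ref{eq:fractional_smoothing_LpLq}) only go from $L^p$ to $L^q$ with $p\le q$, and $S_{\mu,V}(s)$ leaves constants bounded, so it does not map $L^\infty$ into $L^2$. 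Your own text signals the problem (``is not directly available'', the unresolved question marks), and the ``optimization over $\lambda$'' is never attached to a valid inequality. The paper's idea is to decompose the \emph{initial datum} rather than the time interval: since the semigroup is order preserving, $\|S_{\mu,V}(t)\|_{\infty\to\infty}=\|S_{\mu,V}(t)\mathbf{1}\|_{L^\infty}$, and one writes $\mathbf{1}=\Car_R+(1-\Car_R)$ with $\Car_R$ the indicator of $B(x,R)$. The ball part lies in $L^2$ with norm $\sim R^{N/2}$ and is estimated by $\|S_{\mu,V}(1)\|_{2\to\infty}\,\|S_{\mu,V}(t-1)\|_{2\to2}\,R^{N/2}$; the tail part is dominated by $S_\mu(t)(1-\Car_R)$, which by the polynomial decay of the fractional kernel (\ref{eq:kernel-fractional-heat-smgp})--(\ref{eq:Hmu(z)}) is $\lesssim t/R^{2\mu}$. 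Minimizing the sum over $R$ yields $\|S_{\mu,V}(t)\|_{\infty\to\infty}\lesssim \|S_{\mu,V}(t-1)\|_{2\to2}^{1/(1+\frac{N}{4\mu})}\,t^{N/(N+4\mu)}$, hence $\omega_\infty\ge\omega_2/(1+\tfrac{N}{4\mu})$; for $\mu=1$ the Gaussian tail is $e^{-\eps R^2/t}$, so taking $R=At$ kills it at any prescribed exponential rate and only a polynomial correction survives, giving $\omega_\infty=\omega_2$. Without this spatial truncation your proof of (ii), and of the self-contained version of (iii), does not go through (citing Simon/Arendt--Batty for $\mu=1$ is legitimate, as the paper's subsequent remark acknowledges).

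Separately, in part (i) your interpolation runs in the wrong direction: from $\|S(t)\|_{p\to p}\le\|S(t)\|_{2\to2}^{2/p}\|S(t)\|_{\infty\to\infty}^{1-2/p}$ and $\|S(t)\|_{\infty\to\infty}\le1$ you obtain an \emph{upper} bound $\|S(t)\|_{p\to p}\le\|S(t)\|_{2\to2}^{2/p}$, which (all norms being $\le 1$ and $2/p\le 1$) does not imply $\|S(t)\|_{2\to2}\le\|S(t)\|_{p\to p}$; the analogous deduction for the $L^\infty$ factor fails for the same reason. The correct argument, and the paper's, interpolates $L^2$ as the midpoint of $L^p$ and $L^{p'}$, so that $\|S(t)\|_{2\to2}\le\|S(t)\|_{p\to p}^{1/2}\|S(t)\|_{p'\to p'}^{1/2}=\|S(t)\|_{p\to p}$ by the adjoint identity, and likewise $\|S(t)\|_{p\to p}\le\|S(t)\|_{1\to1}^{1/p}\|S(t)\|_{\infty\to\infty}^{1-1/p}=\|S(t)\|_{\infty\to\infty}$. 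This is a local fix, but the step as you wrote it is false.
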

\begin{proof}
  (i) Since for $1\leq p< \infty$ the  semigroup in $L^{p'}(\R^{N})$ is the adjoint of the
 semigroup in $L^{p}(\R^{N})$ we get the equalities in the
 statement.

 Now, if $1\leq p< 2 < p'$ then by the Riesz-Thorin interpolation
 theorem we get $
 \| S_{\mu,V}(t)\|_{2 \to 2} \leq   \| S_{\mu,V}(t)\|_{p \to p} =\|
 S_{\mu,V}(t)\|_{p' \to p'}$.

 \noindent (ii)
From (i) we immediately get $\omega_{2}\geq \omega_{p} = \omega_{p'}\geq
\omega_{\infty} = \omega_{1}$. 
Since  also $\omega_{\infty} = \omega_{1} \geq 0$ then if $\omega_{2} =0$
then the result is proved. Therefore we just need to consider the case
 $\omega_{2} >0$, that is, the semigroup decays exponentially in
 $L^{2}(\R^{N})$. 

 Since the semigroup in $L^{\infty}(\R^{N})$ is order preserving, then
for  $t>0$, $\|S_{\mu,V}(t)\|_{\infty\to \infty} = \|S_{\mu,V} (t)
\mathbf{1}\|_{L^\infty(\R^N)}$ so we estimate below the sup norm of
the nonnegative  solution $S_{\mu,V} (t) \mathbf{1}$. For this fix
$x\in \R^{N}$ and denote by $\Car_{R}$ the 
characteristic function of the ball $B(x,R)$ and then 
\begin{equation} \label{eq:splitting_4_estimate}
  \big(S_{\mu, V} (t)\mathbf{1}\big)  (x) = \big(S_{\mu, V} (t)
  \Car_{R} \big)(x)  + \big( S_{\mu, V} (t) (1-\Car_{R})\big) (x)  .
\end{equation}
Now since  from part (ii) in Proposition
\ref{prop:VCF_with_uniform_potential} the semigroup $S_{\mu, V} (t) $
satisfies
(\ref{eq:fractional_smoothing_LpLq}) we have, for $t\geq 1$,
\begin{displaymath}
 \big( S_{\mu, V} (t) \Car_{R} \big)(x) = \big( S_{\mu, V} (1) S_{\mu,
   V} (t-1) \Car_{R} \big) (x)
\end{displaymath}
and therefore
\begin{displaymath}
0\leq   \big(S_{\mu, V} (t) \Car_{R}\big) (x) \leq \|S_{\mu, V} (1)\|_{2\to \infty}
  \|S_{\mu, V} (t-1)\|_{2 \to 2} \|\Car_{R}\|_{2}  = C  \|S_{\mu, V}
  (t-1)\|_{2 \to 2} R^{\frac{N}{2}} .
\end{displaymath}

Now, for the second term in (\ref{eq:splitting_4_estimate}), using
again  part (ii) in Proposition \ref{prop:VCF_with_uniform_potential},
the properties of the fractional heat kernel in
(\ref{eq:fractional_semigroup_kernel_L1U}),
(\ref{eq:kernel-fractional-heat-smgp}),  (\ref{eq:Hmu(z)}) and
changing variables,  we have 
\begin{displaymath}
  0 \leq \big( S_{\mu, V} (t) (1-\Car_{R})\big) (x)   \leq \big(
  S_{\mu} (t) (1-\Car_{R})\big) (x)  = \int_{\R^{N}} k_{0,\mu}(z)
  (1-\Car_{R}(x-t^{\frac{1}{2\mu}}z )) \, dz , 
\end{displaymath}
that is,
\begin{displaymath}
   0 \leq \big( S_{\mu, V} (t) (1-\Car_{R})\big) (x) \leq
   \int_{|z|>\frac{R}{t^{\frac{1}{2\mu}}}} k_{0,\mu}(z) \, dz  \sim
   \int_{|z|>\frac{R}{t^{\frac{1}{2\mu}}}}
   \frac{1}{(1+|z|^{2})^{\frac{N+2\mu}{2}}} \, dz  . 
\end{displaymath}
Using polar coordinates we get
\begin{displaymath}
     0 \leq \big( S_{\mu, V} (t) (1-\Car_{R})\big) (x) \lesssim
     \int_{\frac{R}{t^{\frac{1}{2\mu}} }}^{\infty} \frac{1}{r^{2\mu +1}} \,
     dr  \sim \frac{t}{R^{2\mu}}  
\end{displaymath}
and therefore, for $t\geq 1$ and $R>0$, 
\begin{displaymath}
  \|S_{\mu,V}(t)\|_{\infty\to \infty}  \leq  c\|S_{\mu, V}
  (t-1)\|_{2 \to 2} R^{\frac{N}{2}} + c \frac{t}{R^{2\mu}}  . 
\end{displaymath}
Minimizing this in $R>0$ we get, for $t\geq 1$, 
\begin{displaymath}
    \|S_{\mu,V}(t)\|_{\infty\to \infty}  \leq  c  \|S_{\mu, V}
  (t-1)\|_{2 \to 2}^{\frac{1}{1+\frac{N}{4\mu}}} t^{\frac{N}{N +4\mu}}
  . 
\end{displaymath}
Therefore 
\begin{displaymath}
\omega_{\infty} \geq \frac{\omega_{2}}{1+\frac{N}{4\mu}} . 
\end{displaymath}

\noindent (iii)
  For the case $\mu=1$ we proceed as in case (ii)  but since the
  kernel is the Gaussian we get 
  \begin{displaymath}
 0 \leq \big( S_{1, V} (t) (1-\Car_{R})\big) (x) \lesssim     \int_{|z|>\frac{R}{t^{\frac{1}{2}}}} e^{-|z|^{2}} \, dz =
     \int_{\frac{R}{t^{\frac{1}{2}} }}^{\infty}  r^{N-1} e^{-r^{2}} \,
     dr \lesssim       \int_{\frac{R}{t^{\frac{1}{2}} }}^{\infty}  r
     e^{-\eps r^{2}} \,     dr  \sim e^{-\eps \frac{R^{2}}{t}}
  \end{displaymath}
with $\eps <1$, so choosing $R= At$ with $A$ large,  we get a decay of order $e^{-\eps At}$ as fast as
we want. Also, from the estimate in case (ii) above, we have 
\begin{displaymath}
  0\leq \big(S_{1, V} (t) \Car_{R}\big) (x)  \leq c  \|S_{1, V}
  (t-1)\|_{2 \to 2} t^{\frac{N}{2}} 
\end{displaymath}
which is only a
polynomial correction of the decay of $\|S_{1,V}(t)\|_{2\to
  2}$. Therefore $\omega_{\infty} = \omega_{2}$ and the result is
proved. 
\end{proof}

\begin{remark}

  \begin{enumerate}
  \item
    The independence of the exponential type when $\mu=1$ holds even without
assuming       $V\geq 0$, see Theorem B.5.1 in \cite{simon82:_schroed} and references
therein.

\item
  When $0<\mu <1$ the same proof of part (ii) in Theorem
  \ref{thm:exponential_type_independent_Lp} but using $\|S_{\mu, V}
  (t-1)\|_{p \to p}$ with $1\leq p<\infty$ leads to
  \begin{displaymath}
    \omega_{p} \geq \omega_{\infty} \geq \frac{\omega_{p}}{1+\frac{N}{2p\mu}}
  \end{displaymath}
  and this combined with $\omega_{p}= \omega_{p'}$ give
  \begin{displaymath}
    \lim_{p\to 1} \omega_{p} = \omega_{1}= \omega_{\infty} = \lim _{p\to \infty} \omega_{p} .
  \end{displaymath}

  \item 
    Observe that for $1<p<\infty$, if  
    $\|S_{\mu, V}(t) \|_{\mathcal{L}(L^{p}(\R^{N}))} =1$ for all $t>0$
    then there are solutions that converge to 0 in $L^{p}(\R^{N})$ arbitrarily slow.

    More precisely, for any continuous function
    $g:[0,\infty) \to (0,1]$ such that $\lim_{t\to\infty}g(t)=0$,
    there exists $u_0\in L^{p}(\R^{N})$ such that $\|S_{\mu,
      V}(t)u_0\|_{L^{p}} \to 0$ as $t\to \infty$ and 
    \begin{displaymath}
      \limsup_{t\to\infty} \frac{\|S_{\mu, V}(t)u_0\|_{L^{p}}}{g(t)}=\infty .
    \end{displaymath}

    To see this, observe first that  $(-\Delta)^{\mu}$ is injective in
    $L^{p}(\R^{N})$ for $1\leq p<\infty$. Actually, from
    \cite{grafakos14:_class_fourier}, Chapter 2, Corollary 2.42, 
  page 125, we have that  for a tempered distribution such that
  $(-\Delta)^{\gamma} \phi =0$ then  
  the Fourier transform  $\hat
  \phi$ has support in $\{0\}$ and therefore $\phi$ is a
  polynomial.

  Then using Corollary 1.1.4 in \cite{2001_Martinez-Sanz} we get that
  if $1<p<\infty$ then $(-\Delta)^{\mu}$ has dense range in
  $L^{p}(\R^{N})$.  Since the semigroup $\{S_{\mu}(t)\}_{t\geq 0}$ is
  analytic and bounded  in  $L^{p}(\R^{N})$ for $1< p<\infty$, this  in turn implies, by Remark 1.5 in
  \cite{ArendtBattyBenilan92}, that for all $u_{0}\in L^{p}(\R^{N})$
  we have $\|S_{\mu}(t) u_{0}\|_{L^{p}}\to 0$ as $t\to \infty$. From
  part (ii) in Proposition \ref{prop:VCF_with_uniform_potential}  we
 also have  $\|S_{\mu, V}(t) u_{0}\|_{L^{p}}\to 0$ as $t\to \infty$.

    Now assume by contradiction that for all
    $u_0\in L^{p}(\R^{N})$ there exists $C=C(u_{0})$ such that
    \begin{displaymath}
      \frac{\|S_{\mu, V}(t)u_0\|_{L^{p}}}{g(t)} \leq C(u_{0}), \quad t\geq
      0.
    \end{displaymath}
    Then, the uniform bounded principle implies that, for some $M>0$,
    \begin{displaymath}
      \frac{\|S_{\mu, V}(t)\|_{\mathcal{L}(L^{p})}}{g(t)} \leq M  .
    \end{displaymath}
    But then for large enough $t$ we have
    $\|S_{\mu, V}(t)\|_{\mathcal{L}(L^{p})} \leq M g(t) <1$ which is a
    contradiction.
  \end{enumerate}
\end{remark}

We also have the following result that implies that if there is
exponential decay, then we can take $M=1$ in
(\ref{eq:exponential_decay}). 

\begin{proposition}\label{prop:alternative_contractions_or_decay}
Either
$\|S_{\mu, V}(t) \|_{\mathcal{L}(L^{2}(\R^{N}))} =1$ for
all $t>0$ or there exists $a>0$ such that for all $1\leq p\leq \infty$
\begin{displaymath}
  \|S_{\mu, V} (t)\|_{\mathcal{L}(L^p(\R^N))} \leq  e^{-at}, \quad
  t\geq0 .
\end{displaymath}
\end{proposition}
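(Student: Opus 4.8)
The plan is to exploit the self-improving nature of the contraction/decay dichotomy already established in the discussion preceding \eqref{eq:exponential_decay}, combined with the fact that the semigroup on $L^{2}(\R^{N})$ is \emph{selfadjoint}. First I would observe that the form $a(\cdot,\cdot)$ is symmetric and nonnegative, so the generator $A_{2}=-(-\Delta)^{\mu}-V$ on $L^{2}(\R^{N})$ is selfadjoint and $\leq 0$; hence the operator norm $\|S_{\mu,V}(t)\|_{2\to 2}$ equals the supremum of the spectrum of $e^{tA_{2}}$, which is $e^{-t\lambda_{0}}$ where $\lambda_{0}=-s(A_{2})=\inf\sigma(-A_{2})\geq 0$. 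The point of selfadjointness is that for $t=1$ we get $\|S_{\mu,V}(1)\|_{2\to 2}=e^{-\lambda_{0}}$ \emph{exactly}, with no constant $M>1$: either $\lambda_{0}=0$ and then $\|S_{\mu,V}(t)\|_{2\to 2}=1$ for all $t$, or $\lambda_{0}>0$ and then by the spectral theorem $\|S_{\mu,V}(t)\|_{2\to 2}=e^{-\lambda_{0}t}\leq e^{-at}$ with $a=\lambda_{0}>0$, already with $M=1$.

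Next I would upgrade this to all $1\leq p\leq\infty$ using Theorem \ref{thm:exponential_type_independent_Lp}(i), which gives for $1\leq p<\infty$ the ordering $\|S_{\mu,V}(t)\|_{2\to 2}\leq\|S_{\mu,V}(t)\|_{p\to p}\leq\|S_{\mu,V}(t)\|_{1\to 1}=\|S_{\mu,V}(t)\|_{\infty\to\infty}$. So the $L^{2}$ case is the \emph{smallest}, and I cannot simply transfer the $M=1$ bound directly to other $p$. Instead I would argue as follows: if $\lambda_{0}>0$, then $\|S_{\mu,V}(1)\|_{2\to 2}=e^{-\lambda_{0}}<1$; by the self-improvement argument in the text (writing $t=k+s$ with $k\in\N$, $s\in(0,1]$, using the semigroup property and $\|S_{\mu,V}(s)\|_{2\to2}\leq 1$) one gets $\|S_{\mu,V}(t)\|_{2\to 2}\leq e^{\lambda_{0}}e^{-\lambda_{0}t}$. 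The genuinely new input needed is that the \emph{same} exponential rate persists on every $L^{p}$: this is exactly what Theorem \ref{thm:exponential_type_independent_Lp}(ii)--(iii) provides, namely $\omega_{p}>0$ for all $p$ as soon as $\omega_{2}>0$, with a common (possibly smaller) decay rate. So fix such a common rate $a_{1}\in(0,\min_{p}\omega_{p}]$; then for each $p$ there is $M_{p}\geq1$ with $\|S_{\mu,V}(t)\|_{p\to p}\leq M_{p}e^{-a_{1}t}$.

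Finally I would remove the constant $M_{p}$ at the price of slightly shrinking the rate, using the same self-improvement trick \emph{within each} $L^{p}$: pick $t_{0}$ large enough that $M_{p}e^{-a_{1}t_{0}}<1$, set $\alpha=\|S_{\mu,V}(t_{0})\|_{p\to p}<1$, and then the argument reproduced before \eqref{eq:exponential_decay} gives $\|S_{\mu,V}(t)\|_{p\to p}\leq e^{-at}$ with $a=|\ln\alpha|/t_{0}>0$ and $M=1$; note here that $M=1$ is achieved because $\|S_{\mu,V}(s)\|_{p\to p}\leq 1$ for $s\in(0,t_{0}]$ by contractivity, so no leftover factor $1/\alpha$ appears once one absorbs it correctly — more precisely, for $t=kt_{0}+s$ one has $\|S_{\mu,V}(t)\|_{p\to p}\leq\alpha^{k}=e^{-a k t_{0}}=e^{-a(t-s)}\leq e^{a t_{0}}e^{-at}$, and since this must hold for all $t$ one simply replaces $a$ by a marginally smaller $a'>0$ and checks $e^{at_{0}}e^{-at}\leq e^{-a't}$ for all $t\geq 0$ by taking $a'$ small, or equivalently restarts the dichotomy at a larger $t_{0}'$. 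To make the clean statement, the cleanest route is: having one $t_{0}$ with $\alpha<1$ suffices, and then for \emph{every} $t\geq 0$ write $t=kt_{0}+s$ and bound $\|S_{\mu,V}(t)\|_{p\to p}\leq \alpha^{k}\cdot 1\leq \alpha^{-1}\alpha^{(t/t_{0})}$; choosing $t_{0}$ even larger so that $\alpha\leq e^{-2}$ say, one absorbs $\alpha^{-1}\leq e^{2}\leq \alpha^{-1}$ and verifies the bare bound $e^{-at}$ holds with $a=|\ln\alpha|/(2t_{0})$. The main obstacle, and the only nontrivial ingredient, is precisely the $p$-independence of positivity of the exponential type, which is why the proof must invoke Theorem \ref{thm:exponential_type_independent_Lp} rather than being purely formal; everything else is the elementary semigroup self-improvement already spelled out in the text.
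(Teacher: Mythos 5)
Your argument for $p=2$ is fine: selfadjointness of the generator plus the spectral theorem give $\|S_{\mu,V}(t)\|_{2\to 2}=e^{-\lambda_{0}t}$ exactly, hence $M=1$ in that case. The gap is in the final step, where you try to remove the constant $M_{p}$ for the other values of $p$ by the ``self-improvement'' trick. From $\alpha=\|S_{\mu,V}(t_{0})\|_{p\to p}<1$ and contractivity you only get $\|S_{\mu,V}(t)\|_{p\to p}\leq \alpha^{\lfloor t/t_{0}\rfloor}\leq \alpha^{-1}e^{-at}$ with $a=|\ln\alpha|/t_{0}$, and the prefactor $\alpha^{-1}>1$ cannot be absorbed by shrinking the rate: the inequality $e^{at_{0}}e^{-at}\leq e^{-a't}$ fails at $t=0$ (and for all small $t$) for every $a'>0$, and your alternative computation only yields $\alpha^{-1}\alpha^{t/t_{0}}\leq e^{-|\ln\alpha|t/(2t_{0})}$ for $t\geq 2t_{0}$, while for $0<t<2t_{0}$ the only information available is $\|S_{\mu,V}(t)\|_{p\to p}\leq 1$, which does not imply $\|S_{\mu,V}(t)\|_{p\to p}\leq e^{-a't}<1$. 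This is not a cosmetic issue: a contraction semigroup can satisfy $\|T(t)\|=1$ on an initial interval and still have $\|T(t_{0})\|<1$ later (the nilpotent translation semigroup on $L^{2}(0,1)$ is the standard example), so no purely formal manipulation of the dichotomy preceding (\ref{eq:exponential_decay}) can produce $M=1$. Since the version with $M\geq 1$ is already recorded in (\ref{eq:exponential_decay}), obtaining $M=1$ uniformly in $p$, with a single rate $a$, \emph{is} the content of the proposition.

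The paper closes this gap by a structural argument rather than by norm manipulations: once $\omega_{2}>0$, for $0<a<\omega_{2}$ the operator $\mathscr{A}=-(-\Delta)^{\mu}-V+a$ has spectrum in $(-\infty,0]$, hence $\int_{\R^{N}}|(-\Delta)^{\mu/2}\phi|^{2}+\int_{\R^{N}}(V-a)|\phi|^{2}\geq 0$; the quadratic form with the shifted potential $V-a$ is therefore still nonnegative, and re-running the construction of Proposition \ref{prop:nonegative_potential} with $V-a$ in place of $V$ exhibits $\{e^{at}S_{\mu,V}(t)\}_{t\geq0}$ as an order-preserving semigroup of \emph{contractions} on every $L^{p}(\R^{N})$, $1\leq p\leq\infty$, which is exactly the bound $\|S_{\mu,V}(t)\|_{p\to p}\leq e^{-at}$ with the same $a$ for all $p$. (Note the paper does not need Theorem \ref{thm:exponential_type_independent_Lp} here; the combination goes into Corollary \ref{cor:alternative_contractions_or_decay}.) If you want to stay closer to your route, Riesz--Thorin interpolation between the exact $L^{2}$ bound $e^{-\lambda_{0}t}$ and the $L^{1}$ (resp.\ $L^{\infty}$) contraction bound would give $M=1$ with rate $\theta\lambda_{0}$ for $1<p<\infty$, but the endpoints $p=1,\infty$ remain inaccessible that way, so some version of the paper's form-theoretic argument seems unavoidable.
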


\begin{proof}
  Again, as above, if for some $t_{0}>0$ we have $\|S_{\mu, V}(t_{0})
\|_{\mathcal{L}(L^{2}(\R^{N}))} <1$,  then we have (\ref{eq:exponential_decay}) in
  $L^2(\R^N)$.  
  From this and the results in
  Section \ref{sec:fractional-schrodinger-semigroup} we see that the
  semigroup $\{e^{at} S_{\mu,V}(t)\}_{t\geq0}$ in $L^2(\R^N)$ is
  bounded, analytic, $C^0$, and is generated by
the selfadjoint operator  $\mathscr{A}= - (-\Delta)^\mu -V + a$ with 
the same domain   $D(\mathscr{A})$ as $- (-\Delta)^\mu -V$. In
particular,  
  $D(\mathscr{A}) =H^{2\mu}(\R^N)$ if $p_0\geq 2$.

  In particular, using \cite[Remark 5.4, Chapter 1]{1983_Pazy},  the
  right half space in $\C$ is in the resolvent of $\mathscr{A}$
  and therefore the  spectrum $\sigma(\mathscr{A})$ must be contained
  in $(-\infty,0]$. Then \cite[Proposition 12.8,
  p. 270]{2012_Schmudgen} implies 
\begin{displaymath}
\langle -\mathscr{A} \phi, \phi \rangle_{L^2(\R^N)} \geq 0 \quad \text{ for all } \phi \in D(\mathscr{A})
\end{displaymath}
as otherwise $-\mathscr{A}$
would have to possess a negative eigenvalue. 

This in turn implies, using \cite[Corollary 4.4]{1983_Pazy} that this
operator generates a semigroup of contractions, 
$\{e^{at} S_{\mu,V}(t)\}_{t\geq0}$. That is, we have $M=1$ in (\ref{eq:exponential_decay})
when $p=2$. See also \cite{KubruslyLevan2011}. 

But  since
\begin{displaymath}
\langle -\mathscr{A} \phi, \phi \rangle_{L^2(\R^N)}
=\int_{\R^{N}} |(-\Delta)^\frac{\mu}{2}\phi|^{2} + \int_{\R^{N}} (V-a)
|\phi|^2 \geq0, \quad \phi \in D(\mathscr{A})
\end{displaymath} 
this also implies that we can use Proposition
\ref{prop:nonegative_potential} with the potential $V-a$ to conclude
that $-\mathscr{A}=-(-\Delta)^\mu - V + a $  defines an order
preserving 
 semigroup of contractions in
 $L^{p}(\R^{N})$ for any $1\leq p\leq  \infty$, that is,  $\{e^{at}
 S_{\mu,V}(t)\}_{t\geq0}$ in $L^p(\R^N)$ is of contractions for any
 $1\leq p\leq  \infty$. 
\end{proof}

This result combined with Theorem
\ref{thm:exponential_type_independent_Lp} gives the following.

\begin{corollary}
\label{cor:alternative_contractions_or_decay}

Either
$\|S_{\mu, V}(t) \|_{\mathcal{L}(L^{p}(\R^{N}))} =1$ for
all $t>0$ and all $1\leq p\leq \infty$ or there exists $a>0$ such that for all $1\leq p\leq \infty$
\begin{displaymath}
  \|S_{\mu, V} (t)\|_{\mathcal{L}(L^p(\R^N))} \leq  e^{-at}, \quad
  t\geq0 .
\end{displaymath}
\end{corollary}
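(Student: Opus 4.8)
The plan is to combine the dichotomy for the $L^2$ semigroup established in Proposition \ref{prop:alternative_contractions_or_decay} with the comparison of exponential types across $L^p$ obtained in Theorem \ref{thm:exponential_type_independent_Lp}. The only subtlety is to make sure the two possible alternatives stated for $p=2$ propagate correctly, and consistently, to every $1\leq p\leq\infty$.

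First I would recall that by Proposition \ref{prop:alternative_contractions_or_decay} exactly one of the following holds: either $\|S_{\mu,V}(t)\|_{\mathcal L(L^2(\R^N))}=1$ for all $t>0$, or there exists $a>0$ with $\|S_{\mu,V}(t)\|_{\mathcal L(L^p(\R^N))}\leq e^{-at}$ for all $t\geq0$ and \emph{all} $1\leq p\leq\infty$ simultaneously. In the second case the statement of the corollary is already proved, so it remains to analyze the first case and show it forces $\|S_{\mu,V}(t)\|_{\mathcal L(L^p(\R^N))}=1$ for all $t>0$ and all $1\leq p\leq\infty$.

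So assume $\|S_{\mu,V}(t)\|_{\mathcal L(L^2(\R^N))}=1$ for all $t>0$; equivalently $\omega_2=0$ in the notation of Section \ref{sec:exponential-decay}. By part (i) of Theorem \ref{thm:exponential_type_independent_Lp} we have, for every $1\leq p<\infty$ and all $t\geq0$,
\begin{displaymath}
  \|S_{\mu,V}(t)\|_{2\to 2} \leq \|S_{\mu,V}(t)\|_{p\to p} \leq \|S_{\mu,V}(t)\|_{\infty\to\infty} = \|S_{\mu,V}(t)\|_{1\to 1},
\end{displaymath}
which at the level of exponential types reads $\omega_2\geq\omega_p\geq\omega_\infty=\omega_1\geq0$. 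Since $\omega_2=0$ this squeezes $\omega_p=0$ for every $1\leq p\leq\infty$. Because the semigroup is of contractions in each $L^p(\R^N)$ (Proposition \ref{prop:nonegative_potential}), the discussion at the beginning of Section \ref{sec:exponential-decay} shows that $\omega_p=0$ is equivalent to $\|S_{\mu,V}(t)\|_{\mathcal L(L^p(\R^N))}=1$ for all $t>0$: if instead $\|S_{\mu,V}(t_0)\|_{\mathcal L(L^p(\R^N))}<1$ for some $t_0>0$, the semigroup property would yield exponential decay and hence $\omega_p>0$, a contradiction. This gives the first alternative of the corollary for all $1\leq p\leq\infty$.

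There is essentially no hard step here: the corollary is a bookkeeping consequence of the two previous results, and the only thing to be careful about is the direction of the inequalities in Theorem \ref{thm:exponential_type_independent_Lp}(i), which make $\omega_2$ the \emph{largest} of the types so that $\omega_2=0$ pins all the others to $0$. If one wanted to be completely self-contained one could note that for $\mu=1$ part (iii) of that theorem already gives $\omega_p=\omega_2$ for all $p$, and for $0<\mu<1$ part (ii) gives $\omega_2\geq\omega_p\geq\omega_\infty\geq\omega_2/(1+\tfrac{N}{4\mu})$, so again $\omega_2=0$ implies $\omega_p=0$ for all $p$.
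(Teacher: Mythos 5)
Your argument is correct and uses exactly the same ingredients as the paper: the comparison of exponential types from Theorem \ref{thm:exponential_type_independent_Lp} together with the $L^2$ dichotomy of Proposition \ref{prop:alternative_contractions_or_decay}. The paper merely organizes it in the contrapositive direction (a norm drop in some $L^p$ forces $\omega_2>0$, then invokes the proposition), which is logically equivalent to your case analysis.
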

\begin{proof}
  If for some $1\leq p_{0} \leq \infty$ and $t_{0}>1$ we have
  $\|S_{\mu, V}(t_{0}) \|_{\mathcal{L}(L^{p}(\R^{N}))} <1$ then by
  Theorem \ref{thm:exponential_type_independent_Lp} we get
  $\omega_{2}>0$ and Proposition
  \ref{prop:alternative_contractions_or_decay} gives the result. 
\end{proof}

Now we give conditions on the potential to have exponential
decay. Following \cite{Arendt-Batty} we consider the following
\begin{definition}

The class $\mathcal{G}$ consists of all open subsets of $\R^N$
containing arbitrarily large balls, that is, the sets such that for
any $r > 0$ there exists $x_0 \in \R^N$ such that the ball of radius
$r$ around $x_0$  is included in this set.
\end{definition}

\begin{theorem}
  \label{thr:exponential_decay_4_fractional}

  Assume that $0\leq V \in L^{p_{0}}_U(\R^N)$ with  $p_{0}>\max\{\frac
  N{2\mu}, 1\}$ and there exists $M>0$ such that $0\leq
  V_{M}(x) = \min\{V(x), M\} $ satisfies
\begin{displaymath}
\int_{G} V_{M} (x) dx=\infty \quad \text{ for all } \ G\in \mathcal{G}.
\end{displaymath}

Then there exists  $a>0$, independent of $1\leq p \leq  \infty$,  such that
\begin{displaymath}
\|S_{\mu, V} (t)\|_{\mathcal{L}(L^p(\R^N))} \leq  e^{-at},
\quad t\geq0 .
\end{displaymath}

\end{theorem}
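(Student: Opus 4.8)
The plan is to reduce everything to an $L^2$ statement and then invoke the machinery already built up. First, by Corollary \ref{cor:alternative_contractions_or_decay} it suffices to produce a single $t_0>0$ and a single exponent $p$ (most conveniently $p=2$) for which $\|S_{\mu,V}(t_0)\|_{\mathcal{L}(L^2(\R^N))}<1$; the corollary then upgrades this to exponential decay with a $p$-independent rate in every $L^p(\R^N)$, $1\le p\le\infty$. So the whole problem is to show that the $L^2$-contraction semigroup $\{S_{\mu,V}(t)\}$ is \emph{not} isometric for some time. Equivalently, since the semigroup is selfadjoint, analytic and $C^0$ on $L^2(\R^N)$, generated by $A_2=-(-\Delta)^\mu-V$ with associated form $a(\phi,\phi)=\|(-\Delta)^{\mu/2}\phi\|_{L^2}^2+\int_{\R^N}V|\phi|^2$, it is enough to prove the \emph{spectral gap / positivity at infinity} estimate
\begin{displaymath}
  a(\phi,\phi)\ \ge\ \delta\,\|\phi\|_{L^2(\R^N)}^2\qquad\text{for all }\phi\in H^\mu(\R^N),
\end{displaymath}
for some $\delta>0$; this is exactly $s(A_2)\le-\delta<0$, hence $\omega_2>0$.

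Next I would reduce the potential to its truncation. By hypothesis there is $M>0$ with $\int_G V_M=\infty$ for every $G\in\mathcal G$, where $0\le V_M=\min\{V,M\}\le V$. Since $V\ge V_M$ pointwise, we have $a(\phi,\phi)\ge a_M(\phi,\phi):=\|(-\Delta)^{\mu/2}\phi\|_{L^2}^2+\int_{\R^N}V_M|\phi|^2$, so it suffices to prove the coercivity estimate for the \emph{bounded} potential $V_M$. Thus, without loss of generality, I may assume $0\le V\le M\in L^\infty(\R^N)$ and $\int_G V=\infty$ for all $G\in\mathcal G$. The point of this reduction is that for a bounded potential the semigroup is given by the variation-of-constants formula on all of $L^p$ (Proposition \ref{prop:VCF_with_bounded_potential}) and, more importantly, one has the full spectral-theoretic apparatus for the selfadjoint operator $-(-\Delta)^\mu-V$ with form domain $H^\mu(\R^N)$, domain $H^{2\mu}(\R^N)$.

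The core of the argument is then the coercivity estimate $a_V(\phi,\phi)\ge\delta\|\phi\|_{L^2}^2$ for bounded $V$ satisfying the Arendt--Batty condition, and this is where the main work lies. The hypothesis $\int_G V=\infty$ for all $G\in\mathcal G$ is exactly the statement that the set $\{V<\varepsilon\}$ (for any fixed $\varepsilon>0$) does \emph{not} contain arbitrarily large balls, i.e.\ there is $R=R(\varepsilon)$ such that \emph{every} ball $B(x_0,R)$ meets $\{V\ge\varepsilon\}$ in a set whose measure (or $V$-mass) is bounded below, uniformly in $x_0$. I would argue by contradiction and compactness: if no $\delta$ worked, there would be a sequence $\phi_n\in H^\mu$ with $\|\phi_n\|_{L^2}=1$ and $a_V(\phi_n,\phi_n)\to0$, forcing both $[\phi_n]_{H^\mu}\to0$ (so $\phi_n$ is becoming ``flat'') and $\int_{\R^N}V|\phi_n|^2\to0$. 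A fractional Poincaré inequality on balls (or a covering by unit cubes together with Lemma \ref{lem:get_nomrs_in_cubes}) shows that on any ball the oscillation of $\phi_n$ is controlled by $[\phi_n]_{H^\mu(B)}$; combined with the uniform lower bound on the $V$-mass inside every ball of the fixed radius $R$, one deduces that $\|\phi_n\|_{L^2(B(x_0,R))}^2$ is small uniformly in $x_0$, which after summing over a tiling contradicts $\|\phi_n\|_{L^2(\R^N)}=1$. The nonlocality of $(-\Delta)^\mu$ is the technical obstacle here — the Gagliardo seminorm does not localize cleanly — but this is handled precisely by the cube-decomposition of Lemma \ref{lem:get_nomrs_in_cubes} and the embedding $H^\mu(Q_i)\subset L^r(Q_i)$ with uniform constants used already in the proof of Proposition \ref{prop:nonegative_potential}; one replaces the classical Rellich compactness on each cube by the fractional Rellich theorem.

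Once $a_V(\phi,\phi)\ge\delta\|\phi\|_{L^2}^2$ is established for the truncated potential, it holds a fortiori for $V$ itself, hence $s(A_2)\le-\delta$, hence $\|S_{\mu,V}(t)\|_{\mathcal L(L^2)}\le e^{-\delta t}$ and in particular $<1$ for $t>0$. Corollary \ref{cor:alternative_contractions_or_decay} then delivers a common exponent $a>0$ with $\|S_{\mu,V}(t)\|_{\mathcal L(L^p)}\le e^{-at}$ for all $1\le p\le\infty$ and all $t\ge0$, which is the assertion. I expect the compactness/localization step producing $\delta$ — in particular getting the fractional Poincaré inequality on balls and controlling the nonlocal tail of $[\phi_n]_{H^\mu}$ uniformly in the center of the ball — to be the main obstacle; everything after that is a direct invocation of the earlier results.
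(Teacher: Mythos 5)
Your route is genuinely different from the paper's. The paper also truncates to $V_M$ first and invokes Corollary \ref{cor:alternative_contractions_or_decay}, but then works in $L^\infty$ rather than $L^2$: it writes $u(t)=S_{\mu,V_M}(t)\mathbf{1}$ via the variation of constants formula (Proposition \ref{prop:VCF_with_bounded_potential}), iterates Duhamel once to obtain $u(t)\le \mathbf{1}+(t\|V_M\|_{\infty}-1)\int_0^t S_\mu(t-s)V_M\,ds$, converts the hypothesis into the uniform mass bound $\inf_{x}\int_{B(x,r)}V_M\ge c$ via \cite[Proposition 1.4]{Arendt-Batty}, and bounds $S_\mu(s)V_M$ from below pointwise using the kernel estimates (\ref{eq:kernel-fractional-heat-smgp})--(\ref{eq:Hmu(z)}); this yields $\|S_{\mu,V_M}(t)\mathbf{1}\|_{L^\infty}<1$ explicitly for any $t<1/\|V_M\|_{\infty}$, with no localization of the nonlocal energy whatsoever. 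Your variational route (coercivity of the form, i.e. $a_*>0$, then the energy identity giving $\omega_2\ge a_*$) is viable and would in fact furnish a direct proof of the converse implication in Theorem \ref{thm:decay_and_bottom_spectrumL2}, which the paper only obtains by citing the present theorem; the price is exactly the fractional Poincar\'e inequality and the cube decomposition you anticipate.

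One step of your sketch must be repaired before it is a proof: from ``$\|\phi_n\|^2_{L^2(B(x_0,R))}$ is small uniformly in $x_0$'' you cannot reach a contradiction by ``summing over a tiling'', since a sum of infinitely many uniformly small terms can perfectly well equal $1$. What is needed is the quantitative per-cube bound $\|\phi_n\|^2_{L^2(Q)}\le C\bigl(\int_Q V_M|\phi_n|^2+[\phi_n]^2_{W^{\mu,2}(Q)}\bigr)$, obtained by splitting $\phi_n$ on $Q$ into its mean plus an oscillation, using $\int_Q V_M\ge c$ to control the mean and the Poincar\'e inequality together with $V_M\le M$ to control the oscillation. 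The right-hand side is then summable over the tiling (the Gagliardo terms because the sets $Q\times Q$ are disjoint in $\R^N\times\R^N$, cf. Lemma \ref{lem:get_nomrs_in_cubes}), with total controlled by $\int_{\R^N}V_M|\phi_n|^2+\|(-\Delta)^{\mu/2}\phi_n\|^2_{L^2}\to 0$, and this does contradict $\|\phi_n\|^2_{L^2}=1$. With that correction --- and noting that no Rellich-type compactness is actually used, the Poincar\'e inequality alone suffices --- your argument closes and delivers the statement.
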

\begin{proof}
Since $0\leq    V_{M}(x) \leq V(x)$, by    Proposition \ref{prop:VCF_with_uniform_potential}, it
is enough to prove the decay for $S_{\mu,V_{M}}(t)$. Therefore we
can assume $V$ is bounded.

From Corollary \ref{cor:alternative_contractions_or_decay}, to  prove the
result  it is enough  to find $t>0$ such
that
\begin{displaymath}
\| S_{\mu,V}(t)\|_{\infty \to \infty} = \|S_{\mu, V} (t) \mathbf{1}\|_{L^\infty(\R^N)} <1 .
\end{displaymath}

For this,
notice that with  $u_{0} =\mathbf{1}$, we have in particular that,
from Proposition \ref{prop:VCF_with_bounded_potential},  $u(t)=
  S_{\mu,V} (t) \mathbf{1} $ satisfies
  \begin{displaymath}
u(t) = S_{\mu} (t) \mathbf{1} -   \int_{0}^{t} S_{\mu} (t-s) V u(s)
\, ds = \mathbf{1} -    \int_{0}^{t} S_{\mu} (t-s) V u(s) \, ds .
  \end{displaymath}
  Substituting the expresion above for $u(s)$ inside the integral term
  we get
  \begin{displaymath}
u(t) = \mathbf{1} -   \int_{0}^{t} S_{\mu} (t-s) V \, ds +
\int_{0}^{t} S_{\mu} (t-s) V  \int_{0}^{s} S_{\mu} (s-r) V u(r) \, dr
\, ds
  \end{displaymath}

  We use that $0\leq u(s) \leq S_{\mu} (s) \mathbf{1} = \mathbf{1} $ and
  $0\leq V\leq  \|V\|_{\infty} \mathbf{1}$ and then
  \begin{displaymath}
    \int_{0}^{t} S_{\mu} (t-s) V  \int_{0}^{s} S_{\mu} (s-r) V u(r) \,
    dr \, ds  \leq  \|V\|_{\infty} \int_{0}^{t} S_{\mu} (t-s) V
    \int_{0}^{s} S_{\mu} (s-r) \mathbf{1}  \, dr
\, ds
  \end{displaymath}
  \begin{displaymath}
    = \|V\|_{\infty} \int_{0}^{t} s S_{\mu} (t-s) V
\, ds  = \|V\|_{\infty} \int_{0}^{t} (t-s) S_{\mu} (s) V
\, ds \leq \|V\|_{\infty} t \int_{0}^{t} S_{\mu} (s) V
\, ds .
  \end{displaymath}

  Therefore
  \begin{displaymath}
    u(t) \leq  \mathbf{1} -   \int_{0}^{t} S_{\mu} (t-s) V \, ds + \|V\|_{\infty} t \int_{0}^{t} S_{\mu} (s) V
\, ds = \mathbf{1}  + (\|V\|_{\infty} t -1) \int_{0}^{t} S_{\mu} (t-s)
V \, ds .
  \end{displaymath}

From  the results in Section \ref{sec:basic-results}
\begin{displaymath}
(S_\mu(s) V)(x) =\int_{\R^N} k_{\mu}(s,x,y) V(y) \, dy
\end{displaymath}
where
$k_{\mu}(s,x,y)= \frac{1}{t^{\frac{N}{2\mu}}} k_{0,\mu}
    \left(\frac{x-y}{t^{\frac{1}{2\mu}}} \right)$ is as in
    (\ref{eq:kernel-heat-smgp}) and
    (\ref{eq:kernel-fractional-heat-smgp}). Using \cite[Proposition
    1.4]{Arendt-Batty} we see in turn
that there exist $c>0$ and $r>0$ such that
\begin{displaymath}
  \int_{B(x,r)} V (y) dy \geq c \ \text{ for all } \ x\in \R^N.
\end{displaymath}

For  any  $x\in \R^N$ and $s>0$ we have
\begin{displaymath}
\begin{split}
  S_\mu(s) V(x) &=\int_{\R^N} \frac{1}{s^\frac{N}{2\mu}}
k_{0,\mu}\left(\frac{x-y}{s^{\frac{1}{2\mu}}}\right)  V (y) \, dy
\\&
\geq \int_{B(x,r)} \frac{1}{s^\frac{N}{2\mu}}
k_{0,\mu}\left(\frac{x-y}{s^{\frac{1}{2\mu}}}\right) V(y) \, dy
\\&
\geq
  \inf_{|z|\leq r} \frac{1}{s^\frac{N}{2\mu}}
k_{0,\mu}\left(\frac{z}{s^{\frac{1}{2\mu}}}\right) \int_{B(x,r)} V(y) \, dy
  \\&
  \geq
c   \begin{Bmatrix}
   (4\pi s)^{-\frac{N}2} e^{-\frac{r^2}{4s}}, & \mu=1\\
  \frac{ s}{(s^\frac{1}{\mu} + r^2)^\frac{N+2\mu}{2}}
  , & 0<\mu<1
  \end{Bmatrix}
  =: g_\mu (s)
 \end{split}
  \end{displaymath}
 where we have used the estimate of fractional heat equation kernel from (\ref{eq:kernel-heat-smgp}) and (\ref{eq:kernel-fractional-heat-smgp})-(\ref{eq:Hmu(z)}).

 Denoting $\mathscr{G}_\mu(t)
  \mydef     \int_0^t g_\mu(s) \, ds$, we get,  for  $0< t <
  \frac{1}{\|V\|_{L^\infty(\R^N)}}$ and $x\in \R^{N}$
\begin{displaymath}
(S_{\mu, V} (t){\bf 1})(x) \leq {\bf 1} - (1-t
\|V\|_{L^\infty(\R^N)}) \mathscr{G}_\mu(t) <1 .
\end{displaymath}
\end{proof}

In case $V$ can be approximated by bounded potentials, we have the
following result.

\begin{proposition}
\label{prop:Arendt-Batty_4_bounded_approximations}

  Assume $V \in L^{1}_{U}(\R^{N})$ can be approximated in
  $L^{1}_{U}(\R^{N})$ by bounded functions.
  Then, they are equivalent

  \begin{enumerate}
\item
    $\int_{G} V_{M} =\infty$ for all $G\in \mathcal{G}$ with some
$M$.

  \item
    $\int_{G} V =\infty$ for all $G\in \mathcal{G}$.
\end{enumerate}
\end{proposition}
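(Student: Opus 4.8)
The plan is to deduce both implications from the geometric characterisation underlying \cite[Proposition 1.4]{Arendt-Batty}: for any nonnegative $W\in L^{1}_{loc}(\R^{N})$ one has $\int_{G}W=\infty$ for every $G\in\mathcal{G}$ if and only if there exist $c,r>0$ such that $\int_{B(x,r)}W\geq c$ for all $x\in\R^{N}$. I would recall (or re-derive in one line) both directions: the ``if'' part by packing into any ball $B(y,\rho)\subset G$ a number $m\sim(\rho/2r)^{N}$ of pairwise disjoint balls of radius $r$, so that $\int_{G}W\geq mc\to\infty$ as $\rho\to\infty$; and the ``only if'' part by contraposition, choosing $x_{n}$ with $\int_{B(x_{n},n)}W<2^{-n}$ and taking $G=\bigcup_{n}B(x_{n},n)\in\mathcal{G}$, for which $\int_{G}W\leq 1<\infty$. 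Although \cite[Proposition 1.4]{Arendt-Batty} is phrased for potentials in $L^{1}(\R^{N})+L^{\infty}(\R^{N})$, this argument uses only nonnegativity and local integrability, so it applies equally to $V$ and to every $V_{M}$, all of which lie in $L^{1}_{U}(\R^{N})\subset L^{1}_{loc}(\R^{N})$.

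Granting this, the implication (i)$\Rightarrow$(ii) is immediate, since $0\leq V_{M}\leq V$ gives $\int_{G}V\geq\int_{G}V_{M}=\infty$ for every $G\in\mathcal{G}$. For the converse (ii)$\Rightarrow$(i) I would proceed in three steps. First, apply the characterisation to $W=V$ to get $c,r>0$ with $\int_{B(x,r)}V\geq c$ for all $x$. Second, fix a covering of $B(0,r)$ by $K=K(N,r)$ unit balls; translating it shows $\int_{B(x,r)}|V-V_{M}|\leq K\,\|V-V_{M}\|_{L^{1}_{U}(\R^{N})}$ for every $x$, where $V-V_{M}=(V-M)^{+}\geq 0$ (recall $V\geq 0$). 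Since $V$ can be approximated in $L^{1}_{U}(\R^{N})$ by bounded functions, Proposition \ref{prop:approximation_by_bounded_functions}(i) yields $\|V-V_{M}\|_{L^{1}_{U}(\R^{N})}\to 0$ as $M\to\infty$, so one may fix a single $M$ with $K\,\|V-V_{M}\|_{L^{1}_{U}(\R^{N})}<c/2$. Third, for that $M$ one gets $\int_{B(x,r)}V_{M}\geq\int_{B(x,r)}V-c/2\geq c/2$ for all $x$, and applying the characterisation in the other direction to $W=V_{M}$ gives $\int_{G}V_{M}=\infty$ for all $G\in\mathcal{G}$, i.e. (i).

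The main obstacle is that one cannot argue by the naive splitting $\int_{G}V=\int_{G}V_{M}+\int_{G}(V-V_{M})$: over an unbounded $G\in\mathcal{G}$ all three quantities may be infinite, so nothing is gained. The role of the characterisation is precisely to replace the global condition on sets in $\mathcal{G}$ by a local condition on balls of one fixed radius $r$, on which the $L^{1}_{U}$-smallness of $V-V_{M}$ can be exploited uniformly in $x$; the only other point requiring a remark is that the cited characterisation must be used in the slightly larger class $L^{1}_{U}(\R^{N})$, which is harmless as noted above.
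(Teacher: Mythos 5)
Your proof is correct and follows essentially the same route as the paper: both implications are reduced, via \cite[Proposition 1.4]{Arendt-Batty}, to the uniform lower bound $\inf_{x}\int_{B(x,r)}V\geq c$, and the covering argument plus the $L^{1}_{U}$-convergence $V_{M}\to V$ transfers that bound to $V_{M}$ for a single large $M$. The only additions are cosmetic (re-deriving the cited characterisation and noting it extends to nonnegative $L^{1}_{loc}$ functions), so no further comment is needed.
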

\begin{proof}
  Since $0\leq V_{M}\leq V$, clearly (i) implies (ii).

  Conversely, from \cite[Proposition 1.4]{Arendt-Batty},  (ii) is
  equivalent to the existence of   $c>0$ and $r>0$ such that
\begin{displaymath}
\inf_{x\in \R^N} \int_{B(x,r)} V \geq c.
\end{displaymath}
By a simple covering argument, independent of $x$, we get that  $\sup_{x\in\R^N}\| V_{M} - V \|_{L^1(B(x,r))}\leq C \|
V_{M} - V \|_{L^1_{U}(\R^N)}\to 0$ as $M\to \infty$ and therefore, for
some  $M$ we have  $\sup_{x\in\R^N}\| V_{M} - V \|_{L^1(B(x,r))} <
\frac{c}{2}$. This implies that for all $x\in \R^{N}$
\begin{displaymath}
\int_{B(x,r)} V_{M} =
\int_{B(x,r)} V
+
\int_{B(x,r)} (V_{M} -V)
> c - \frac{c}{2} = \frac{c}{2}.
\end{displaymath}
Again  \cite[Proposition 1.4]{Arendt-Batty} gives (i).
\end{proof}

\begin{remark}
  Now we give an example that shows that in general in Proposition
  \ref{prop:Arendt-Batty_4_bounded_approximations} (ii) does not imply
  (i). That is, we show that may have  for all $M>0$ and $r>0$
  \begin{displaymath}
\inf_{x\in \R^N} \int_{B(x,r)} V_{M} =0
\end{displaymath}
but for some $r>0$,
\begin{displaymath}
\inf_{x\in \R^N} \int_{B(x,r)} V >0 .
\end{displaymath}

Denote by $\{Q_{i}\}$ the family of cubes centered at points $i\in
\Z^{N}$ of
integer coordinates in $\R^{N}$ and with edges of length 1 parallel to
the axes. Then notice that $B(i, 1) \subset Q_{i}$  and define in $Q_{i}$ for $i\neq 0$,
\begin{displaymath}
  V(x) =  c  |i|^{N} \Car_{B(i, \frac{1}{|i|})}
\end{displaymath}
and $V= \frac{1}{c}  \Car_{B(0, 1)}$ in $Q_{0}$ where $c$ is   the measure of the unit
ball.

Then
\begin{enumerate}
\item
  For every $i$, $\int_{Q_{i}} V = 1$ so
  $V\in L^{1}_{U}(\R^{N})$.

  \item
 There exists an $r>0$ such that for every $x\in \R^{N}$, $B(x,r)$
  contains at least one cube $Q_{i}$. Then
  \begin{displaymath}
    \int_{B(x,r)} V \geq 1
  \end{displaymath}

  \item
 For every $M>0$ and every $r>0$ there exists $m(r)\in \N$ such
  that $B(x,r)$ is contained in $m(r)$ cubes and if $|x|$ is large
  then the centers of these cubes satisfy $|i|\geq |x|-2r>0$. For any
  such cubes
  \begin{displaymath}
    \int_{Q_{i}} V_{M} \leq \frac{c M}{|i|^{N}}
  \end{displaymath}
  and then if we take $|x|\to \infty$ we see that
  \begin{displaymath}
    \int_{B(x,r)} V_{M} \leq  m(r) \frac{c M}{(|x|-2r)^{N}} \to 0 .
  \end{displaymath}
\end{enumerate}

\end{remark}

Our next results characterises the exponential type of the fractional
Schrödinger semigroup.

\begin{theorem}\label{thm:decay_and_bottom_spectrumL2}

  With the notations in Theorem
  \ref{thr:exponential_decay_4_fractional}, assume    $0< \mu \leq 1$
  and $0\leq V \in L^{p_{0}}_U(\R^N)$ with
  $p_{0}>\max\{\frac N{2\mu}, 1\}$.

  Then 
\begin{displaymath}
\omega_{2}= a_{*}\mydef \inf\{\int_{\R^{N}} |(-\Delta)^{\frac{\mu}{2}} \phi|^2
    + \int_{\R^{N}} V |\phi|^2 \colon \phi\in C^\infty_c(\R^N), \
    \|\phi\|_{L^2(\R^N)}=1 \}  \geq 0.
  \end{displaymath}

Moreover, if $a_{*}>0$ then
\begin{equation}\label{eq:Arendt-Batty_condition}
\int_{G} V (x) dx=\infty \quad \text{ for all } \ G\in \mathcal{G}.
\end{equation}
Conversely, if $V$  can be approximated in
$L^{1}_{U}(\R^{N})$ by bounded functions, then
(\ref{eq:Arendt-Batty_condition}) implies $a_{*}>0$.

\end{theorem}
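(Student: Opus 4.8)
The plan is to prove the three assertions separately, exploiting the self-adjoint structure in $L^2(\R^N)$ and invoking Theorem \ref{thr:exponential_decay_4_fractional} together with Proposition \ref{prop:Arendt-Batty_4_bounded_approximations} for the converse. For the identity $\omega_{2}=a_{*}$, I would recall from Proposition \ref{prop:nonegative_potential} that in $L^2(\R^N)$ the semigroup $\{S_{\mu,V}(t)\}_{t\geq0}$ is self-adjoint and is generated by the operator $A_{2}=-(-\Delta)^{\mu}-V$ associated with the closed, symmetric, nonnegative form $a(\phi,\psi)=\int_{\R^N}(-\Delta)^{\mu/2}\phi\,(-\Delta)^{\mu/2}\psi+\int_{\R^N}V\phi\psi$ with form domain $H^{\mu}(\R^N)$. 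By the variational (min--max) characterisation of the bottom of the spectrum of a semibounded self-adjoint operator, $\inf\sigma(-A_{2})=\inf\{a(\phi,\phi):\phi\in H^{\mu}(\R^N),\ \|\phi\|_{L^2(\R^N)}=1\}$; and since the estimate in the proof of Proposition \ref{prop:nonegative_potential} shows that $a$ is continuous on $H^{\mu}(\R^N)$ while $C^{\infty}_{c}(\R^N)$ is dense there, this infimum equals $a_{*}$. Because the semigroup is self-adjoint, $\|S_{\mu,V}(t)\|_{\mathcal{L}(L^2(\R^N))}=e^{-t a_{*}}$ for $t\geq 0$, whence $\omega_{2}=a_{*}$ (equivalently $-\omega_{2}=s(A_{2})=-a_{*}$, cf. \cite{weis95:_stability_Lp}).

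For the implication $a_{*}>0\Rightarrow$ (\ref{eq:Arendt-Batty_condition}) I would argue by contraposition. Suppose $\int_{G}V<\infty$ for some $G\in\mathcal{G}$ and fix $\psi\in C^{\infty}_{c}(B(0,1))$ with $\|\psi\|_{L^2(\R^N)}=1$. As $G$ contains arbitrarily large balls, for each $r>0$ choose $x_{r}$ with $B(x_{r},r)\subset G$ and set $\phi_{r}(x)=r^{-N/2}\psi\big((x-x_{r})/r\big)$, so that $\|\phi_{r}\|_{L^2(\R^N)}=1$ and $\supp\phi_{r}\subset B(x_{r},r)\subset G$. The dilation scaling of $(-\Delta)^{\mu/2}$ gives $\int_{\R^N}|(-\Delta)^{\mu/2}\phi_{r}|^{2}=r^{-2\mu}\int_{\R^N}|(-\Delta)^{\mu/2}\psi|^{2}\to 0$ as $r\to\infty$, while $\int_{\R^N}V|\phi_{r}|^{2}\leq r^{-N}\|\psi\|_{\infty}^{2}\int_{B(x_{r},r)}V\leq r^{-N}\|\psi\|_{\infty}^{2}\int_{G}V\to 0$. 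Hence $a(\phi_{r},\phi_{r})\to 0$ along normalised functions, so $a_{*}=0$, contradicting $a_{*}>0$; therefore $\int_{G}V=\infty$ for every $G\in\mathcal{G}$.

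For the converse, assume $V$ can be approximated in $L^{1}_{U}(\R^N)$ by bounded functions and that (\ref{eq:Arendt-Batty_condition}) holds. Since the uniform spaces are nested, $V\in L^{p_{0}}_{U}(\R^N)\subset L^{1}_{U}(\R^N)$, so Proposition \ref{prop:Arendt-Batty_4_bounded_approximations} applies and yields $M>0$ with $\int_{G}V_{M}=\infty$ for all $G\in\mathcal{G}$. Theorem \ref{thr:exponential_decay_4_fractional} then provides $a>0$ with $\|S_{\mu,V}(t)\|_{\mathcal{L}(L^2(\R^N))}\leq e^{-at}$ for all $t\geq 0$, so $\omega_{2}\geq a>0$, and by the first part $a_{*}=\omega_{2}>0$, as desired.

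I expect the only genuinely computational step to be the scaling estimate in the second paragraph --- checking that the quadratic form of the rescaled bump decays like $r^{-2\mu}$ (which uses only $\widehat{(-\Delta)^{\mu/2}[\psi(\cdot/r)]}(\xi)=r^{N}|\xi|^{\mu}\widehat{\psi}(r\xi)$ and a change of variables, and is valid for all $0<\mu\leq1$). Everything else is an assembly of the self-adjoint-semigroup dictionary (spectral theorem, form cores, the bound $-\omega_{p}=s(A_{p})$) and of the already established Proposition \ref{prop:Arendt-Batty_4_bounded_approximations} and Theorem \ref{thr:exponential_decay_4_fractional}; the mild care needed is to confirm that $C^{\infty}_{c}(\R^N)$ is a form core, which is exactly the continuity of $a$ on $H^{\mu}(\R^N)$ proved inside Proposition \ref{prop:nonegative_potential}.
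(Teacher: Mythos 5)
Your proof is correct, and the second and third parts coincide with the paper's argument (the rescaled-bump construction $\phi_r(x)=r^{-N/2}\psi((x-x_r)/r)$ is, up to reparametrisation, exactly the paper's $\psi_n(x)=n^{-\frac{N}{2\mu}}\psi(n^{-\frac1\mu}x)$ translated to $x_n$, and the converse is assembled from Proposition \ref{prop:Arendt-Batty_4_bounded_approximations} and Theorem \ref{thr:exponential_decay_4_fractional} in both cases). Where you genuinely diverge is the identity $\omega_2=a_*$. You go straight through the spectral theorem: the form is closed on $H^\mu(\R^N)$ with form norm equivalent to $\|\cdot\|_{H^\mu}$ (this uses $V\geq0$ for the lower bound and the continuity estimate from Proposition \ref{prop:nonegative_potential} for the upper bound), $C^\infty_c(\R^N)$ is a form core, the variational principle gives $\inf\sigma(-A_2)=a_*$, and self-adjointness gives $\|S_{\mu,V}(t)\|_{2\to2}=e^{-ta_*}$ exactly. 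The paper instead proves the two inequalities separately: $a_*\geq\omega_2$ via Proposition \ref{prop:alternative_contractions_or_decay} (which itself invokes self-adjointness through \cite{2012_Schmudgen} to get nonnegativity of $-\mathscr{A}$), and $\omega_2\geq a_*$ by directly differentiating $f(t)=e^{2at}\|S_{\mu,V}(t)\phi\|_{L^2}^2$ and showing $f'\leq0$ when $a<a_*$. Your route is shorter and yields the sharper statement that the operator norm equals $e^{-ta_*}$ for every $t$, at the price of importing the full form-core and spectral-calculus machinery; the paper's energy computation is more elementary and self-contained. Both are sound.
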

\begin{proof}
From Proposition \ref{prop:alternative_contractions_or_decay},  for any $a <\omega_{2}$
  we have that the   semigroup $\{e^{at}
  S_{\mu,V}(t)\}_{t\geq0}$ in $L^2(\R^N)$ is of contractions  and the
  operator $\mathscr{A}=
  - (-\Delta)^\mu -V+ a$ in $L^2(\R^N)$  satisfies 
\begin{displaymath}
\langle -\mathscr{A} \phi, \phi \rangle_{L^2(\R^N)} \geq 0 \quad
\text{ for all } \phi \in C^{\infty}_{c}(\R^N) \subset
D(\mathscr{A}). 
\end{displaymath}

In particular, taking  $\phi\in C^\infty_c(\R^N)$ with  $\|\phi\|_{L^2(\R^N)}=1$ then $\int_{\R^{N}}
|(-\Delta)^{\frac{\mu}{2}} \phi|^2     + \int_{\R^{N}} V |\phi|^2
\geq a >0 $. Hence $a_{*}\geq a$ and therefore $a_{*}\geq
\omega_{2}$.

Conversely, if $a<a_{*}$ we take  $f(t)= e^{2at}
\|u(t)\|_{L^2(\R^N)}^2$ with $u(t)=S_{\mu,V}(t)
\phi$  and then
\begin{displaymath}
  f'(t) = 2a e^{2at} \int_{\R^{N}} u(t)^{2} + 2 e^{2at} \int_{\R^{N}}
  u(t) \frac{d}{dt}u(t) =  2a e^{2at} \int_{\R^{N}} u(t)^{2} + 2 e^{2at} \int_{\R^{N}}
  u(t) (-(-\Delta)^{\mu} u(t) - V u(t))
\end{displaymath}
and then
\begin{displaymath}
  \frac{e^{-2a t}}{2} f'(t)  = -  \int_{\R^{N}} |(-\Delta)^{\frac{\mu}{2}} u(t)|^{2} +
  Vu(t)^{2} + a \int_{\R^{N}} u(t)^{2} \leq (a-a_{*}) \int_{\R^{N}}
  u(t)^{2} \leq 0
\end{displaymath}
which yields $ f'(t)\leq 0$ and then $\|S_{\mu, V} (t)\|_{\mathcal{L}(L^2(\R^N))} \leq
e^{-at}$.
Therefore $\omega_{2}\geq a_{*}$.

Now, if $a_{*}>0$, assume that (\ref{eq:Arendt-Batty_condition})  fails, that is, there exists $G\in \mathcal{G}$
such that
\begin{displaymath}
\int_G V <\infty .
\end{displaymath}
Then we choose a positive sequence satisfying
$r_n> n^{\frac{1}{\mu}}$ and a sequence of points $\{x_n\}\subset \R^N$ such that
$B(x_n,r_n)\subset G$ for each $n\in\N$.

Assume we had  a sequence of functions such that $\{\phi_n\}\subset
C^\infty_c(B(x_n,r_n))$, $\|\phi_n\|_{L^2(\R^N)}=1$ such that
\begin{displaymath}
\lim_{n\to\infty} \|\phi_n\|_{L^\infty(\R^N)}=0 , \quad \lim_{n\to\infty}\|(-\Delta)^\frac{\mu}{2}
\phi_n\|_{L^2(\R^N)}=0.
\end{displaymath}
Then we would  have
\begin{displaymath}
\begin{split}
0< a_*
    &\leq  \int_{\R^N} |(-\Delta)^{\frac{\mu}{2}} \phi_n|^2
    + \int_{\R^N} V |\phi_n|^2
    = \int_{\R^N} |(-\Delta)^{\frac{\mu}{2}} \phi_n|^2
    + \int_{G} V |\phi_n|^2
    \\&
    \leq
    \int_{\R^N} |(-\Delta)^{\frac{\mu}{2}} \phi_n|^2
    + \|\phi_n\|_{L^\infty(\R^N)}^2 \int_{G} V \ \to 0 \ \text{ as } \ n\to \infty,
\end{split}
\end{displaymath}
which is a contradiction and therefore (\ref{eq:Arendt-Batty_condition}) must be true.
To construct a  sequence as above we  take $\psi\in C^\infty_c(B(0,1))$ such that
$\int_{\R^N} |\psi|^2 =1$ and defining 
\begin{displaymath}
\psi_n(x)= n^{-\frac{N}{2\mu}} \psi (n^{-\frac{1}{\mu}} x), \quad x\in \R^N, \ n\in \N
\end{displaymath}
we see that $\{\psi_n\}\subset C^\infty_c(B(0,r_n))$,
$\|\psi_n\|_{L^2(\R^N)}=1$, $\lim_{n\to\infty}
\|\psi_n\|_{L^\infty(\R^N)}=0$ and 
\begin{displaymath}
\lim_{n\to\infty}\|(-\Delta)^\frac{\mu}{2}
\psi_n\|_{L^2(\R^N)}=\lim_{n\to\infty} n^{-1} \|(-\Delta)^\frac{\mu}{2}
\psi\|_{L^2(\R^N)} = 0 
\end{displaymath}
since $(-\Delta)^\frac{\mu}{2}$ is an homogenous operator of degree
$\mu$.
Then $\phi_n(\cdot) = \psi_n(\cdot+x_n)$,  $n\in \N$, satisfies the
conditions above since $(-\Delta)^\frac{\mu}{2}$ is invariant
under translations, see (\ref{eq:fractional_laplacian_nonlocal}). 

Conversely, if $V$  can be approximated in
$L^{1}_{U}(\R^{N})$ by bounded functions, then from Proposition
\ref{prop:Arendt-Batty_4_bounded_approximations} and Theorem
\ref{thr:exponential_decay_4_fractional} we get that
(\ref{eq:Arendt-Batty_condition}) implies $a_{*}>0$.
\end{proof}

\bibliographystyle{plain}

\end{document}